\numberwithin{equation}{section}
\newtheorem{thm}{Theorem}[section]
\newtheorem{prop}{Proposition}[section]
\newtheorem{lemma}{Lemma}[section]
\newtheorem{conj}{Conjecture}[section]
\newtheorem{defn}{Definition}[section]
\theoremstyle{definition}
\title{Koebe uniformization for infinitely connected attracting Fatou domains} 
\author{Xiaoguang Wang}
\author{Yi Zhong}
\address{School of Mathematics, Zhejiang University, Hangzhou, 310027, China}
\email{wxg688@163.com}
\address{Institute of Mathematical Sciences and Applications, NingboTech University, Ningbo, 315100, China}
\email{yizhong@zju.edu.cn}
\date{\today}
\subjclass[2020]{Primary 30C20;
Secondary 30C35}
\keywords{Koebe uniformization, Fatou domain, geometrically finite rational map, puzzle piece, turning distortion}
\begin{document}

\begin{abstract}
This paper works on the structure of infinitely connected Fatou damains of rational maps in terms of Koebe uniformization. 
Due to the complicated boundary behavior, the existing uniformization results are failed to apply in general. 
We proved that if the rational map is geometrically finite, 
then its infinitely connected attracting Fatou damain is conformally homeomorphic to a circle domain.
\end{abstract}

\maketitle 

\setcounter{tocdepth}{1}
% \addtocontents{toc}{\protect\setcounter{tocdepth}{1}}
\tableofcontents

\section{Introduction}\label{sec1}
A circle domain  is a domain in the Riemann sphere $\widehat{\mathbb{C}}$ such that its boundary components are round circles or points.
In 1909, P. Koebe \cite{Koebe1} predicted that
\begin{conj}[Koebe's conjecture]
	Every connected domain in $\widehat{\mathbb{C}}$ is conformally homeomorphic to a circle domain.
\end{conj}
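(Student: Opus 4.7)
The plan is to attack the conjecture by approximating an arbitrary connected domain $\Omega\subset\widehat{\mathbb{C}}$ by a nested exhaustion of finitely connected subdomains, uniformizing each of these, and extracting a normal limit. First I would fix three distinct points $z_0,z_1,z_2\in\Omega$ and choose an exhaustion $\Omega_1\subset\Omega_2\subset\cdots$ with $\bigcup_n\Omega_n=\Omega$, where each $\Omega_n$ is bounded by finitely many analytic Jordan curves. For each $n$, the classical Koebe theorem for finitely connected planar domains (provable by a direct variational argument on the moduli of the boundary components, or by iterated Schwarz reflection) produces a conformal map $\phi_n\colon\Omega_n\to\Omega_n^\ast$ onto a circle domain, normalized so that $\phi_n(z_0)=0$, $\phi_n(z_1)=1$, $\phi_n(z_2)=\infty$.

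Next I would establish precompactness of the family $\{\phi_n\}$ on every compact subset of $\Omega$ using Koebe's distortion estimates together with the three-point normalization, and pass to a locally uniformly convergent subsequential limit $\phi\colon\Omega\to\widehat{\mathbb{C}}$. By Hurwitz's theorem $\phi$ is either injective or constant, and the fixed normalization rules out constancy, so $\phi$ is a conformal embedding. What remains is to identify $\phi(\Omega)$ as a circle domain, namely to show that every component of $\widehat{\mathbb{C}}\setminus\phi(\Omega)$ is either a round closed disk or a single point.

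For this last step I would follow the transboundary extremal length strategy initiated by Schramm and developed systematically by He and Schramm. The key tool is the transboundary modulus of curve families in $\Omega$ which either run through annular subregions or cross a distinguished boundary component; this is a conformal invariant that simultaneously controls the geometry of $\Omega$ and of its image under any conformal map to a circle domain, and in a circle domain it reduces to the explicit ordinary modulus of an annular family. Using lower semicontinuity of transboundary modulus along the sequence $\phi_n\to\phi$, and the fact that equality holds in the circle-domain target, one can promote Hausdorff convergence of the boundary components to the statement that each component of $\widehat{\mathbb{C}}\setminus\phi(\Omega)$ has vanishing transboundary capacity relative to the others, which by a removability argument forces it to be a disk or a point.

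The main obstacle is precisely this final step when $\Omega$ carries uncountably many boundary components. The He--Schramm machinery handles the countable case by a covering-and-pigeonhole argument: at most countably many boundary components can carry nontrivial transboundary modulus, so the rest are automatically rigid. This pigeonhole step breaks down in the uncountable regime, and no present technique canonically selects which components are round. The hardest part of the plan will therefore be to replace the countable pigeonhole by a measure-theoretic decomposition of the space of boundary components --- for instance, with respect to harmonic measure on $\partial\Omega$ --- strong enough to guarantee that almost every component maps to a round object, and then to treat the exceptional set by a further exhaustion together with a rigidity statement for circle-domain limits. Without such a replacement, the approach gives only the classical He--Schramm theorem rather than the full conjecture.
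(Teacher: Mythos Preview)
The statement you are attempting to prove is \emph{Koebe's conjecture}, which the paper explicitly records as an open problem and does not claim to resolve. There is no ``paper's own proof'' to compare against: the paper only states the conjecture as motivation and then proves the much more restricted Theorem~\ref{Fatou-Koebe}, which concerns infinitely connected attracting Fatou domains of geometrically finite rational maps. That proof proceeds not by uniformizing the domain directly, but by passing to the Cui--Peng holomorphic model $(g,V)$, showing via puzzle-piece degree bounds and the turning-distortion theorem that the non-trivial boundary components of $V$ are \emph{uniform} quasicircles, and then invoking Schramm's cofat-domain theorem (Theorem~\ref{cofat}).

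Your proposal is not a proof of the conjecture, and to your credit you say so yourself in the final paragraph: the exhaustion-plus-normal-families argument combined with transboundary modulus recovers at best the He--Schramm theorem for countably connected domains, and you correctly identify that the pigeonhole step fails in the uncountable regime. The speculative replacement you sketch---a harmonic-measure decomposition of the space of boundary components together with a rigidity theorem for circle-domain limits---is not a known argument, and no one currently knows how to carry it out; this is precisely why the conjecture remains open. So the genuine gap is not a local error but the absence of the central idea: there is at present no mechanism that forces every boundary component of the limit $\phi(\Omega)$ to be round when uncountably many components are present, and your proposal does not supply one.
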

This conjecture was confirmed for finitely connected case by P. Koebe himself \cite{Koebe2}.
Later, P. Koebe \cite{Koebe3} proved that the conjecture is true for a class of domains with some symmetry. For domains with various conditions on the ``limit boundary components'', one can see \cite{Bers,Denneberg,Grotzsch,Haas,Meschowski1,Meschowski2,Sario,Sterbel1,Sterbel2}.

In 1993, Z.X. He and O. Schramm \cite{H-S1} proved that every countably connected domain admits a conformal map onto a circle domain, which is a major breakthrough.
Soon after O. Schramm \cite{Schramm} introduced the tool of transboundary extremal length, 
and utilized it to establish the uniformization for uncountablly connected ``cofat domains'':  
\begin{thm}\label{cofat}
	Every cofat domain in $\widehat{\mathbb{C}}$ is conformally homeomorphic to a circle domain.
\end{thm}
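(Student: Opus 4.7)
The plan is to follow Schramm's strategy, combining an exhaustion by finitely connected domains with the newly-introduced tool of \emph{transboundary extremal length}. Given a domain $\Omega$ with boundary components $\{K_\alpha\}$, a transboundary metric is a pair consisting of a conformal metric $\rho\,|dz|$ on $\Omega$ together with a nonnegative mass $m(K_\alpha)$ for each boundary component; the transboundary length of a curve $\gamma$ is $\int_{\gamma\cap\Omega}\rho\,|dz|+\sum m(K_\alpha)$, summed over components of $\partial\Omega$ hit by $\gamma$. The resulting transboundary modulus of a curve family is a conformal invariant and provides the quantitative control on boundary components that classical extremal length lacks.

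First, I would exhaust $\Omega$ by an increasing sequence of finitely connected subdomains $\Omega_n\subset\Omega$, for instance by filling in all boundary components of spherical diameter below some $\epsilon_n\to 0$. Koebe's finite-case theorem uniformizes each $\Omega_n$ conformally onto a circle domain $D_n$ via a map $f_n$, normalized by prescribing values at three fixed points of $\Omega$.

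Next, I would extract a subsequential limit $f$ of $\{f_n\}$ by a normal families argument and argue that the image is a circle domain. Each round boundary circle in $\partial D_n$ either converges to a round circle in $\widehat{\mathbb{C}}$ or collapses to a point, so in principle $f(\Omega)$ is bounded by round circles and points. The delicate step is to show $f$ is injective on $\Omega$ and that it induces a bijection between the nondegenerate components of $\partial\Omega$ and the round-circle components of $\partial f(\Omega)$.

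This injectivity step is the main obstacle, and it is precisely where the \emph{cofat} hypothesis enters. Cofatness furnishes, for each nontrivial $K_\alpha$, an annular collar with definite geometric substance, which when combined with the conformal invariance of transboundary modulus forces $f(K_\alpha)$ to remain a nondegenerate round circle and prevents two distinct components from merging under the limit. Once each boundary component is matched with a circle of positive radius, global injectivity of $f$ on $\Omega$ follows from a rigidity/monotonicity argument (e.g., iterated Schwarz reflection across the circles of $D_n$ together with a minimization characterization of $f_n$), completing the uniformization.
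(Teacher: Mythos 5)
The paper does not prove Theorem \ref{cofat}; it is quoted verbatim from Schramm's paper \cite{Schramm} and invoked as a black box in the final step of the argument (to uniformize the cofat domain $V$ produced in Theorem \ref{uni-qc}). So there is no in-paper proof against which to compare your attempt --- the relevant comparison is with Schramm's original argument.

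As a high-level sketch of that argument, what you wrote is in the right spirit: Schramm does introduce transboundary extremal length precisely because classical modulus does not ``see'' the uncountably many boundary continua, he does exhaust by finitely connected domains and uniformize each by Koebe's finite-connectivity theorem, and he does use normal families to extract a limit. However, your proposal stops short of a proof exactly at the step you flag as ``the main obstacle.'' The assertions that cofatness ``furnishes an annular collar'' preventing collapse, and that injectivity ``follows from a rigidity/monotonicity argument,'' are the entire technical content of the theorem. In Schramm's treatment this is carried by a careful estimate relating the transboundary modulus of separating curve families in $\Omega$ to that in the approximating circle domains, together with a rigidity statement for cofat circle domains (no nontrivial conformal self-map beyond M\"obius) proved via reflection groups and transboundary modulus monotonicity; the fatness constant $\tau$ enters quantitatively in bounding these moduli from below so that no two boundary components can merge and no nondegenerate component can degenerate in the limit. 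Without making those estimates explicit --- in particular, without stating a transboundary modulus inequality that is actually \emph{uniform} in $n$ and tied to $\tau$ --- the sketch does not constitute a proof, though it is an accurate summary of the route Schramm takes. Note also that your ``fill in small boundary components'' exhaustion needs care: one must verify the filled-in domains remain cofat with a uniform constant, or else replace this with Schramm's actual approximation scheme, since otherwise the modulus bounds need not be uniform.
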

Here is the definition of the cofat domain.
\begin{defn}[\cite{Schramm}]\label{fat}
	A set $A\subset\widehat{\mathbb{C}}$ will be called $\tau-$fat, if for every $x\in A\cap\mathbb{C}$ and for every disk $B=B(x,r)$ of radius $r$ centered at $x$ that does not contain $A$ we have $\operatorname{area}(A\cap B)\geq\tau\cdot\operatorname{area}(B)$. A domain $\Omega\subset\widehat{\mathbb{C}}$ is cofat, if each connected component of its complement is $\tau-$fat for some $\tau>0$.
\end{defn}
In particular, a domain whose boundary components are uniform quasicircles or points is a cofat domain by the fatness of each quasidisk.

\vskip0.15in
Resently, the transboundary extremal length(or transboundary modulus) has been applied in the study of the uniformization of metric spaces, see \cite{Bonk1,B-M1,B-M2,H-L,M-W,R-R,Rehmert}. 
More results related to the Koebe uniformization problem can be found within the works that \cite{Bonk2,H-M,H-S2,H-S3,N-Y,Younsi}. 

Despite these developments, the general case of Koebe's conjecture remains unsolved and stands as an open problem.

\vskip0.15in
Fatou domains as a typical class of domains arising in the study of complex dynamical systems are meaningful to be associated to the Koebe uniformization problem.
For any rational map $f$ on $\widehat{\mathbb{C}}$, the well-known no wandering domains theorem \cite{Sullivan} tells us that all its Fatou components are eventually periodic. 
Moreover, there are only four types of periodic components, namely, attracting(superattracting), parabolic, Siegel disk and Herman ring. 
It is well known that a Siegel disk(or a Herman ring) is conformally equivalent to the unit disc(or a circular annulus). 
However, the uniformizations of attracting and parabolic Fatou domains are generally much more complicated. 
Since the uncountablely many and irregular boundary components, 
uniformization for such Fatou domains can not be covered by the existing works.

\vskip0.15in
In \cite{C-P}, Cui and Peng proved that any multiply-connected fixed attracting Fatou domain $U$ of a rational map $f$ is conformally homeomorphic to a domain $V$ such that its boundary component is either a quasicircle or a point. 
Unfortunately, those quasicircles are not uniform in general. Hence it can not be uniformized by Theorem \ref{cofat}.
The present work investigates the turning of such quasicircles to prove that the non-trivial boundary components of $V$ are uniform quasicircles when $f$ is a geometrically finite rational map. This leads to our main result:
\begin{thm}[Main theorem]\label{Fatou-Koebe}
	Let $U\subset\widehat{\mathbb{C}}$ be an infinitely connected attracting Fatou domain of a geometrically finite rational map $f$. 
	Then $U$ is conformally homeomorphic to a circle domain.
\end{thm}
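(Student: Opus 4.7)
The plan is to reduce the theorem to Schramm's Theorem~\ref{cofat} by refining the Cui--Peng uniformization from \cite{C-P}. Passing to an iterate if necessary, we may assume the attracting Fatou domain $U$ is fixed, and then apply \cite{C-P} to obtain a conformal map of $U$ onto a domain $V\subset\widehat{\mathbb{C}}$ whose boundary components are either single points or quasicircles. The remaining task is to show that, under the hypothesis of geometric finiteness, these quasicircles share a common dilatation constant: once this is done, each bounded complementary component of $V$ is $\tau$-fat for a uniform $\tau>0$ (quasidisks being fat), the singleton components are trivially fat, and Theorem~\ref{cofat} supplies a conformal map from $V$ onto a circle domain; composing gives the uniformization of $U$.

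The first step is to convert the uniform-dilatation requirement into a uniform \emph{turning} bound via the Ahlfors characterization of quasicircles: for every non-trivial boundary component $\gamma$ of $V$ and every pair $x,y\in\gamma$, the diameter of the smaller subarc of $\gamma$ joining $x$ to $y$ should be comparable to $\dist(x,y)$ with a constant independent of $\gamma$. The combinatorial device I would use to encode such pairs $(x,y)$ at all scales is the puzzle-piece decomposition associated to $f$. Deep puzzle pieces accumulate on each boundary component of $V$ (when pulled back through the Cui--Peng conformal map), and the distortion of the iterates of $f$ that relate them to a fixed finite family of shallow pieces controls the shape of $\gamma$ at the corresponding scale.

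The technical heart of the argument will then be a \emph{turning distortion} lemma, asserting that forward iterates of $f$ distort the turning of a curve by at most a universal multiplicative constant provided one stays outside a small neighborhood of the postcritical set. Since $f$ is geometrically finite, its postcritical set meets the Julia set in a finite union of (super)attracting or parabolic cycles; near each such cycle the Koebe distortion theorem has to be replaced by an explicit local model, namely B\"ottcher coordinates at superattracting cycles and Fatou coordinates at parabolic points, both of which yield bounded turning contributions by direct computation. Iterating this lemma along the orbit of a puzzle piece then transfers the bounded turning of the finite family of base pieces to every scale and every boundary quasicircle of $V$, producing the required uniform constant.

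The main obstacle I expect is precisely the turning control near the postcritical set. Away from it, Koebe and quasiconformal estimates are entirely standard; near parabolic points the hyperbolic metric degenerates and one cannot iterate Koebe distortion naively, while at superattracting critical points on $\partial U$ the map is not even locally injective. Geometric finiteness is exactly what makes these difficulties finite in nature: only a bounded number of explicit local pictures have to be analysed, their turning contributions bounded by hand, and then glued with global Koebe estimates to yield the uniform turning bound on which the whole argument hinges.
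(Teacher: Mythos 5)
Your high-level strategy is the same as the paper's: pass (after replacing $f$ by an iterate if necessary) to the Cui--Peng model $(g,V)$ with boundary components quasicircles or points, prove a uniform turning bound via the Ahlfors criterion, deduce cofatness of $V$, and then invoke Schramm's Theorem~\ref{cofat}. The use of a puzzle-piece decomposition to organize the scales is also the same. Where you diverge, and where there is a real gap, is in the mechanism for controlling turning.

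You propose a turning-distortion estimate that only holds \emph{away} from the postcritical set (essentially a Koebe/univalent estimate) and then to patch it near the postcritical cycles via explicit local models in B\"ottcher or Fatou coordinates. The paper instead invokes a turning-distortion theorem (Theorem~\ref{turning-distortion}, from \cite{Q-W-Y}) valid for \emph{proper holomorphic maps of any bounded degree} $d$ with a lower bound $m$ on the separating modulus, with a constant $D(d,m)$ uniform in $d$ and $m$. This makes the local-coordinate analysis near critical points entirely unnecessary: one only has to (i) show the degree of the relevant iterates $g^{p}$ on nested puzzle pieces is uniformly bounded, and (ii) arrange a uniform modulus gap. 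Step (i) is exactly where geometric finiteness is used, via Lemma~\ref{f-deg} (the orbit of any puzzle piece can pass through critical puzzle pieces only a bounded number of times, because $P_f\cap J_f$ is finite and the postcritical pieces eventually separate) and its transfer to $g$ in Lemma~\ref{g-deg}. The paper then observes a further structural fact you have not identified: after grouping the non-trivial Julia components into a \emph{finite} exceptional set $\mathcal{Q}^*$, every remaining $Q^{\dag}$ reaches $\mathcal{Q}^*$ under an iterate $g^{p_\dag}$ that is a holomorphic \emph{homeomorphism} $\widehat{Q^\dag}\to\widehat{g^{p_\dag}(Q^\dag)}$; Lemma~\ref{mainlem} then gives the uniform turning constant $D(1,m)K_1$ directly, with $m$ a minimum of finitely many moduli around the components of $\mathcal{Q}^*$.

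Concretely, your proposal has two gaps. First, you have not produced (or cited) a turning-distortion estimate that you could actually iterate: iterating Koebe distortion along long orbits does \emph{not} by itself give a scale-independent multiplicative constant, because the distortion accumulates; the paper avoids this by applying the bounded-degree turning theorem to a single map $g^{p_\dag}$ rather than composing per-step estimates. Second, your plan to handle the critical behavior by analyzing B\"ottcher and Fatou coordinates misses that, in the $(g,V)$ model, all non-trivial boundary components bound \emph{superattracting} Fatou domains of $g$ with at most one postcritical point each, so parabolic (Fatou-coordinate) behavior never enters the estimate at all; and even for the superattracting case, translating B\"ottcher control of the interior dynamics into a bounded-turning statement for the boundary quasicircle is itself a nontrivial claim that you do not justify. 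In short, the missing ideas are the bounded-degree turning-distortion theorem of \cite{Q-W-Y} and the reduction to a finite exceptional family $\mathcal{Q}^*$ outside of which the relevant return maps are conformal.
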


\vskip0.15in
This paper is organized as follows. In section \ref{sec2}, we briefly recall some basic definitions and theorems of complex analysis and complex dynamics. Section \ref{sec3} is devoted to construct the puzzle pieces of Fatou domains $U$ and $V$. We show that the iterations of rational maps we studied have bounded degree property on the puzzle pieces in section \ref{sec4}. Finally, we proved that the Fatou domain $V$ is cofat by discussing the turning of its non-trivial boundary components, which is arranged in section \ref{sec5}.

\section{Preliminaries}\label{sec2}
This section presents some basic definitions and theorems of complex analysis and complex dynamics as a preparation for further discussions.

For a rational map $f$, denote $C(f)$ by the set of all critical points, 
$$P(f)=\overline{\bigcup_{n\geq1}f^n(C(f))}$$
by the postcritical set. Let $J_f$ be the Julia set of $f$, one of the definitions of geometrically finite rational map is:
\begin{defn}[\cite{T-Y}]
	The rational map $f$ is called geometrically finite if $P_f\cap J_f$ is finite.
\end{defn}

A Julia component of $f$ refers to a connected component within the Julia set. 
Since $f$ is continuous, for each Julia component $E$, we have $f(E)$ is also a Julia component. 
A Julia component $E$ of $f$ is considered periodic if $f^p(E)=E$ for some $p\geq1$, 
eventually periodic if $f^k(E)$ is periodic for some $k\geq0$, 
and wandering if neither of these conditions apply.
A Julia component is trivial if it is a single point, and non-trivial otherwise.  

\subsection{Proper map and turning distortion}
\begin{defn}[\cite{McMullen}]
	Let $U,V$ be a pair of Jordan disks. 
	A proper map between disks $f:U\rightarrow V$ is a holomorphic map such that $f^{-1}(K)$ is compact for every compact set $K\subset V$.
\end{defn}
Let $K$ be a connected and compact subset of $\mathbb{C}$ containing at least two points. 
For any $z_1,z_2\in K$, define the turning of $K$ about $z_1$ and $z_2$ by:
$$\Delta(K;z_1,z_2)=\frac{\operatorname{diam}(K)}{|z_1-z_2|},$$
where $\operatorname{diam}(\cdot)$ is the Euclidean diameter. 
Clearly, $1\leq\Delta(K;z_1,z_2)\leq\infty$ and $\Delta(K;z_1,z_2)=\infty$ if and only if $z_1=z_2$.

\begin{thm}[Turning distortion \cite{Q-W-Y}]\label{turning-distortion}
	Let $(V_j,U_j)$, $j\in\{1,2\}$ be a pair of simply connected planar domains 
	with $\overline{U}_j\subset V_j\subset\mathbb{C}$. 
	$g:V_1\rightarrow V_2$ is a proper holomorphic map of degree $d$, 
	$U_1$ is a connected component of $g^{-1}(U_2)$. 
	We assume $\operatorname{mod}(V_2\setminus\overline{U}_2)\geq m>0$, 
	then there is a constant $D(d,m)>0$ such that 
	$$\Delta(K;z_1,z_2)\leq D(d,m)\Delta(g(K);g(z_1),g(z_2))$$
	for any connected and compact subset $K$ of $U_1$ with $\#K\geq2$ and any $z_1,z_2\in K$.
\end{thm}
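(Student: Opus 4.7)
The plan is to combine a Riemann--Hurwitz style reduction with a distortion estimate on a normalized model. First I would observe that the restriction $g|_{U_1}:U_1\to U_2$ is itself a proper holomorphic map of some degree $d'\leq d$, and that $U_1$ must be simply connected (since a proper holomorphic map onto a simply connected planar domain cannot emanate from a multiply connected one, as a short maximum-principle argument applied to $\log|\cdot|$ on any separating annulus confirms). Pulling back the annulus $V_2\setminus\overline{U}_2$ by $g$, the component of $g^{-1}(V_2\setminus\overline{U}_2)$ that separates $\overline{U}_1$ from $\partial V_1$ is an annulus of modulus at least $m/d$. This reduces the statement to the case of a proper degree-$\leq d$ map between a pair of simply connected disks, each equipped with a modular collar of definite thickness.

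Next I would uniformize. Let $\Psi_j:\mathbb{D}\to V_j$ be Riemann maps, chosen so that the conjugated map $\widetilde{g}:=\Psi_2^{-1}\circ g\circ\Psi_1:\mathbb{D}\to\mathbb{D}$ is defined; being a proper self-map of $\mathbb{D}$, it is a finite Blaschke product of degree $d$. The modulus hypothesis on $V_j\setminus\overline{U}_j$ forces $\widetilde{U}_j:=\Psi_j^{-1}(U_j)$ to lie inside a compact sub-disk $\{|z|\leq r_0\}\subset\mathbb{D}$ for some $r_0=r_0(m,d)<1$. On such a sub-disk, a Blaschke product of degree $d$ has uniformly bounded derivative, and is bounded away from zero except in small neighborhoods of its (at most $d-1$) critical points.

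The core analytic step is to prove the turning bound for $\widetilde{g}$ on $\widetilde{U}_1$. Away from critical points, Koebe distortion gives that $\widetilde{g}$ is locally a bi-Lipschitz perturbation of an affine map, so turning is preserved up to a constant. Near a critical point $c$ of local degree $k\leq d$, the normal form $\widetilde{g}(z)=\widetilde{g}(c)+a(\phi(z))^k$ with $\phi$ locally conformal reduces the analysis to the power map $w\mapsto w^k$; a direct check shows this increases turning by at most a factor of order $k$. A chain-of-balls covering of $K\subset\widetilde{U}_1$ by Koebe disks (away from critical points) and linearizing neighborhoods (near them) then combines these local estimates, via the triangle inequality for diameter and the fact that $|z_1-z_2|$ only needs to be bounded below, into a uniform bound depending only on $d$ and $m$.

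Finally, I would transfer the estimate back to the Euclidean picture. Koebe distortion applied to $\Psi_j$ on $\{|z|\leq r_0\}$ shows $\Psi_j$ is bi-Lipschitz (up to scale) there with constants depending only on $m$, so Euclidean diameters and distances in $U_j$ are comparable to those in $\widetilde{U}_j$, and the turning bound transfers with a controlled constant. The hardest part will be balancing the modular collar against the critical points of $\widetilde{g}$: the linearizing neighborhoods in the chain-of-balls covering must have a definite lower bound on their size relative to $\diam(\widetilde{U}_1)$, which requires using the uniform modular collar to control the derivatives of $\widetilde{g}$ near its critical points, so that the covering uses a bounded (in terms of $d$ and $m$) number of balls without multiplicative cancellation of constants.
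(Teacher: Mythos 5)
This statement is imported from \cite{Q-W-Y}; the present paper does not contain its own proof, so there is no internal argument to compare against. Judged on its own terms, your outline starts from the right reductions (restrict to a modular collar, pull back the annulus to get $\operatorname{mod}(V_1\setminus\overline{U}_1)\geq m/d$, uniformize $V_j$ by Riemann maps to turn $g$ into a Blaschke product, and note $\widetilde{U}_j$ lands in a compact sub-disk of $\mathbb{D}$ of radius depending only on $m$ and $d$). These steps are all sound and are exactly the kind of normalization one expects. The local ingredients you name — Koebe distortion away from critical points, the $w\mapsto w^k$ normal form near them — are also the correct ones.

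The genuine gap is in the combining step, and I would flag two concrete problems. First, the bound ``$w\mapsto w^k$ increases turning by at most a factor of order $k$'' is not a direct check; one must account for the cases where $z_1$, $z_2$ straddle the critical point, where $K$ winds, and where $K$ sits near the boundary circle, and each needs its own short argument before the factor $k$ emerges. Second, and more seriously, the ``chain-of-balls'' picture is the wrong mechanism for this inequality. The quantity $\Delta(K;z_1,z_2)/\Delta(g(K);g(z_1),g(z_2))$ is not a bi-Lipschitz-type distortion that chains multiplicatively along a covering of $K$ by balls; it compares the behavior of $g$ at two \emph{a priori} very different scales, $\operatorname{diam}(K)$ and $|z_1-z_2|$. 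If you try to chain, the number of balls is not controlled by $d$ and $m$ alone (it blows up as $|z_1-z_2|/\operatorname{diam}(K)\to 0$), and your own closing caveat about ``multiplicative cancellation of constants'' is exactly where the argument would break. The correct combining step is a two-scale, non-chained use of Koebe distortion inside a single fixed modular collar: in the simplest case (no critical point within distance $O(\operatorname{diam}(K))$ of $z_1$), one applies the Koebe estimate once at scale $\operatorname{diam}(K)$ to get $\operatorname{diam}(g(K))\gtrsim|g'(z_1)|\operatorname{diam}(K)$, and once at scale $|z_1-z_2|$ to get $|g(z_1)-g(z_2)|\asymp|g'(z_1)||z_1-z_2|$, and the derivative factors cancel in the ratio with no chaining at all. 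The remaining work is to graft the power-map estimate onto this two-scale scheme when a critical point intervenes between the two scales, handling the fact that the at most $d-1$ critical points of the Blaschke product may cluster. That multi-scale case analysis, not a covering argument, is the heart of the lemma and is precisely what your outline leaves unresolved.

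As written, therefore, the proposal is a reasonable high-level plan but does not constitute a proof: the final ``combine the local estimates'' paragraph papers over the place where all the difficulty lives, and the mechanism you propose for it (chain of balls with a bounded number of balls) would not give a constant depending only on $d$ and $m$.
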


\subsection{Ahlfors' characterization of quasicircle}
A quasicircle is a Jordan curve in the complex plane that is the image of a circle under a quasiconformal map of the plane onto itself. It is called a $K-$quasicircle if the quasiconformal map has dilatation $K$. The interior of a quasicircle is called a quasidisk. In this paper, we will use an equivalent definition from Ahlfors \cite{Ahlfors}:
\begin{thm}\label{Ahlfors}
	A planar Jordan curve $S\subset\mathbb{C}$ is a K-quasicircle if and only if it satisfies the $K-$bounded turning condition: there is a constant $K\geq1$ such that for any $p,q\in S$, we have
	$$\Delta(L(p,q);p,q)\leq K,$$
	where $L(p,q)$ is the subarc of $S$ joining $p$ and $q$ with smaller diameter, i.e.
	$$\operatorname{diam}(L(p,q))\leq\operatorname{diam}(S\setminus L(p,q)).$$
\end{thm}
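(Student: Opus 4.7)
The plan is to establish both directions of this equivalence separately, using conformal modulus estimates for the forward direction and a quasiconformal reflection construction for the reverse.

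For the forward direction (quasicircle implies bounded turning), I would take a $K$-quasiconformal self-homeomorphism $\phi:\widehat{\mathbb{C}}\to\widehat{\mathbb{C}}$ sending the unit circle $\mathbb{S}^1$ to $S$ and exploit the fact that $\phi$ distorts the conformal modulus of annuli by at most a factor of $K$. Given $p,q\in S$ and any $r\in L(p,q)$, I would normalize by a M\"obius transformation so that $\phi^{-1}(p)$ and $\phi^{-1}(q)$ sit in standard position on $\mathbb{S}^1$; then the Teichm\"uller-type annulus in $\widehat{\mathbb{C}}\setminus\{p,q,r,\infty\}$ separating $\{p,q\}$ from $\{r,\infty\}$ has modulus controlled by $K$ times the modulus of the corresponding annulus on the circle side, the latter bounded because $\phi^{-1}(r)$ lies on a short arc. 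Teichm\"uller's modulus formula then converts this into a bound on $|p-r|/|p-q|$, and taking the supremum over $r\in L(p,q)$ gives the bounded-turning constant in terms of $K$.

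For the reverse direction (bounded turning implies quasicircle), I would aim to build a quasiconformal reflection $\sigma:\widehat{\mathbb{C}}\to\widehat{\mathbb{C}}$ fixing $S$ pointwise and interchanging the two Jordan domains $\Omega^\pm$ complementary to $S$. A concrete route is to parametrize $S$ by the boundary values of a Riemann map $h:\mathbb{D}\to\Omega^+$ (well defined by Carath\'eodory's theorem, since $S$ is a Jordan curve), show that $h|_{\mathbb{S}^1}$ is a quasisymmetric embedding by converting diameter bounds on subarcs of $S$ coming from $K$-bounded turning into ratio bounds on triples of boundary points, apply the Beurling--Ahlfors extension to produce a quasiconformal self-map of the upper half plane realizing the inverse boundary values, and transplant this to a quasiconformal map of $\Omega^-$ onto the exterior of $\mathbb{D}$. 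Gluing with $h$ across $S$ yields a global quasiconformal self-map of $\widehat{\mathbb{C}}$ sending $\mathbb{S}^1$ to $S$.

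The principal obstacle lies in the reverse direction, specifically in verifying quasisymmetry of the boundary parametrization $h|_{\mathbb{S}^1}$. Bounded turning is an intrinsic metric condition on $S$ phrased in terms of subarc diameters, whereas quasisymmetry imposes ratio control on parameter triples pulled back through a Riemann map whose boundary behavior is \emph{a priori} only that of an abstract homeomorphism; one has no conformal distortion estimates available until the theorem itself is proved. Circumventing this circularity requires either a direct metric-space argument, invoking a quasisymmetric extension theorem for $\mathbb{S}^1\to S$ that bypasses the Riemann map, or Ahlfors' own approach of constructing the quasiconformal reflection directly from a quasi-triangulation of the complementary domains adapted to $S$, where $K$-bounded turning is used repeatedly to control the dilatation on each cell.
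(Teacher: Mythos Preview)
The paper does not prove this theorem. It is stated in the preliminaries (Section~\ref{sec2}) as a classical result of Ahlfors, cited from \cite{Ahlfors}, and used thereafter as a black box; there is no proof in the paper to compare your proposal against.

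Your outline is a reasonable sketch of how the classical proof actually goes. The forward direction via quasi-invariance of conformal modulus is standard and essentially correct. For the reverse direction you correctly isolate the real difficulty: the first route you propose (pull back by a Riemann map and show the boundary homeomorphism is quasisymmetric) is circular exactly as you say, since controlling the boundary behaviour of the Riemann map of a general Jordan domain is essentially equivalent to what you are trying to prove. The approach that works is the last one you list---Ahlfors' direct construction of a quasiconformal reflection from a dyadic decomposition of the two complementary domains, using bounded turning to bound the dilatation cell by cell. So your diagnosis of the obstacle and the way around it are both correct, but the proposal as written is a plan rather than a proof: the actual verification that the piecewise-defined reflection has bounded dilatation, and that the pieces glue to a global quasiconformal map, is where all the work lies.

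One minor point: the statement (here and commonly in the literature) uses the same letter $K$ for the quasiconformal dilatation and the turning constant, but the equivalence is only up to constants depending on each other; your sketch already handles this correctly by producing a turning bound ``in terms of $K$'' rather than equal to $K$.
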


\section{Partition of Fatou domains}\label{sec3}
Let $U$ be a multiply-connected fixed attracting Fatou domain of a geometrically finite rational map $f$. 
Without loss of generality, we assume that $\infty\in U$ is the fixed point of $f$. 

Denote $\mathcal{E}$ by the collection of Julia components of $f$.
For each $E\in\mathcal{E}$, let $\widehat{E}$ be the union of $E$ and the connected component of $\widehat{\mathbb{C}}-E$ which is disjoint from $U$.
Note that $U$ is $f-$ invariant, but not necessarily complete, 
then we have either $f(\widehat{E})=\widehat{f(E)}$ or $f(\widehat{E})=\widehat{\mathbb{C}}$. 
Let 
$$\widetilde{\mathcal{E}}=\{E\in\mathcal{E}: f(\widehat{E})\neq\widehat{f(E)}\}.$$
Given $z\in\widehat{\mathbb{C}}$. 
The number of $f^{-1}(z)$ lies in $\widehat{\mathbb{C}}-U$(counted with multiflicity) is $\deg_{\widehat{\mathbb{C}}}(f)-d$. Then $\#\widetilde{\mathcal{E}}\leq\deg_{\widehat{\mathbb{C}}}(f)-d$, which means $\widetilde{\mathcal{E}}$ is a finite set. 

\subsection{Partition of $U$}
Let $C(f,U)$ be the set of critical points of $f$ in $U$ and $$P_f(U)=\overline{\bigcup_{n\geq1}f^n(C(f,U))}.$$
By the local theory of fixed point, there is a Jordan domain $U_0\subset U$ with smooth boundary such that
\begin{itemize}
	\item $\infty\in U_0\subset\overline{U}_0\subset f^{-1}(U_0)$;
	\item $\partial U_0\cap P_f(U)=\emptyset$;
	\item for any compact set $K\subset U$, there is an integer $k$ such that $g^k(K)\subset U_0$. 
\end{itemize}
We assume that $\partial U_0$ is a Jordan curve. 
Let $U_n$ be the connected component of $f^{-n}(U_0)$ that contains $U_0$. It is clear that $\overline{U}_n\subset U_{n+1}$ and $f(U_{n+1})=U_n$. Denote $\Gamma_n$ by the collection of the boundary components of $\partial U_n$, which consists of finitely many smooth Jordan curves.
For each $\gamma_n\in\Gamma_n$, let $P(\gamma_n)$ be the connected component of $\widehat{\mathbb{C}}-\gamma_n$ which is disjoint from $\infty$.
We call $\mathcal{P}_n=\{P(\gamma_n): \gamma_n\in\Gamma_n\}$ the collection of puzzle pieces of depth $n$. Considering two puzzle pieces of different depth $P_1\in\mathcal{P}_{n_1}$, $P_2\in\mathcal{P}_{n_2}$, only one of the following three will happen:
$$\overline{P}_1\subset P_2,\ \overline{P}_2\subset P_1,\ \overline{P}_1\cap\overline{P}_2=\emptyset.$$

\vskip0.15in
Since $U=\cup_{n\geq0}U_n,$
for each $E\in\mathcal{E}$ and each integer $n\geq0$, 
there exists an unique Jordan curve in $\Gamma_n$ that separates $E$ from $\infty$. Denote such curve by $\gamma_n(E)$. 
It is clear that $\gamma_{n+1}(E)\subset P(\gamma_n(E))$. 

For two different components $E_1$ and $E_2$, we may have $\gamma_n(E_1)=\gamma_n(E_2)$. This means $E_1$ and $E_2$ belong to the same puzzle piece $P(\gamma_n(E_1))$. 
However, as the depth $n$ tends to infinity, $E_1$ and $E_2$ will fall into two distinct puzzles.
Moreover, we have
$$\widehat{E}=\bigcap_{n\geq0}P(\gamma_n(E)).$$
\subsection{Partition of $V$}
	According to \cite{C-P}, there is a rational map $g$ and a completely invariant Fatou domain $V$ of $g$, such that
\begin{enumerate}
	\item $(f,U)$ and $(g,V)$ are holomorphically conjugate, 
	i.e. there is a conformal map $\varphi$ such that $\varphi\circ f=g\circ\varphi$ on $U$;
	\item each non-trivial boundary component of $V$ is a quasicircle which bounds an eventually superattracting Fatou domain of $g$ containing at most one postcritical point of $g$. 
	Moreover, $g$ is unique up to a holomorphic conjugation.
\end{enumerate}

Here $(g,V)$ is called a holomorphic model of $(f,U)$.
We now construct the puzzle pieces of $V$.
Since $V$ is $g-$completely invariant, we have $J_g=\partial V$. 
Denote $\mathcal{Q}$ by the collection of Julia components of $g$. 
For any $Q\in\mathcal{Q}$, we have $g(Q)\in\mathcal{Q}$ and $g(\widehat{Q})=\widehat{g(Q)}$. 
Let $V_n=\varphi(U_n)$, for each connected component $\gamma_n$ of $\partial U_n$, 
we denote $\beta_n=\varphi(\gamma_n)$ by the connected components of $\partial V_n$.
Set $I(\beta_n)$ as the bounded component of $\widehat{\mathbb{C}}-\beta_n$. 
The set $\mathcal{I}_n=\{I(\beta_n):\beta_n=\varphi(\gamma_n),\gamma_n\in\Gamma_n\}$ is the collection of all puzzle pieces of depth $n$.

\vskip0.15in
Since $E=\partial\left(\bigcap_{n\geq1}P(\gamma_n(E))\right)$,
we define $\Psi:\mathcal{E}\rightarrow\mathcal{Q}$ in the following way: 
\begin{equation}\label{Psi}
	\Psi(E)=\Psi\left(\partial\left(\bigcap_{n\geq1}P(\gamma_n(E))\right)\right)=\partial\left(\bigcap_{n\geq1}I(\varphi(\gamma_n(E)))\right)=Q.
\end{equation}
Therefore, the Julia components of $f$ and $g$ are one-to-one through the map $\Psi$.

\section{Bounded degree on puzzle pieces}\label{sec4}
Denote the collection of periodic Julia components of $f$ by $\mathcal{E}_{per}$.
\begin{lemma}\label{f-deg}
	Let $E\in\mathcal{E}\setminus\mathcal{E}_{per}$ be the eventually periodic Julia component of the geometrically finite rational map $f$, with $f^{p}(E)\in\mathcal{E}_{per}$ and $f^{p-1}(E)\notin\mathcal{E}_{per}$
	for some positive integer $p$. 
	Then there exists a sufficiently large integer $N>0$, such that  
	$$\deg\big(f^{p}:P\left(\gamma_{n+p}(E)\right)\rightarrow P\left(\gamma_{n}(f^p(E))\right)\big)$$
	is bounded by some constant $D$, which is independent of $p$ and $E$, for all $n>N$.
\end{lemma}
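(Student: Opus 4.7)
The plan is to apply Riemann--Hurwitz to the factorized proper map, reduce the bound to a count of critical points of $f$ encountered along the pullback orbit of puzzle pieces, and then use geometric finiteness together with the structure of the model $(g,V)$ from \cite{C-P} to bound that count uniformly.

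Set $P_i := P(\gamma_{n+p-i}(f^i(E)))$ for $i = 0, \ldots, p$. Since each $f: P_i \to P_{i+1}$ is a proper holomorphic map between Jordan disks, the full degree factorizes as $\prod_{i=0}^{p-1} d_i$ with $d_i := \deg(f: P_i \to P_{i+1})$, and Riemann--Hurwitz gives $d_i - 1 = \sum_{c \in P_i \cap \operatorname{Crit}(f)}(\deg_c(f) - 1)$. Applying $\log d_i \le d_i - 1$ and summing over $i$ yields
$$\log \prod_{i=0}^{p-1} d_i \;\le\; (\deg f - 1)\sum_{c \in \operatorname{Crit}(f)} N(c), \qquad N(c) := \#\{i : c \in P_i\},$$
so it suffices to uniformly bound each $N(c)$, since $\operatorname{Crit}(f)$ is finite.

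Critical points $c \in U$ are attracted to $\infty$, hence lie in $U_m$ for all $m$ larger than some $n_c$; for $n > \max_{c \in U} n_c$ every $P_i$ is disjoint from such $c$, giving $N(c) = 0$. For $c \notin U$, let $E_c$ denote the unique Julia component with $c \in \widehat{E_c}$. Using that $\widehat{E_c}$ is the unique connected component of $\widehat{\mathbb{C}} \setminus U_m$ that contains $c$, the condition $c \in P_i$ is equivalent to $\gamma_{n+p-i}(E_c) = \gamma_{n+p-i}(f^i(E))$, i.e.\ to the separation depth $m_i$ of the pair $(E_c, f^i(E))$ exceeding $n + p - i$. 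Pushing forward by $f^{\pi_c}$, where $\pi_c$ is the period of the terminal cycle of $E_c$, gives the key relation $m_{i+\pi_c} \ge m_i - \pi_c$, and iterating produces $m_i \le M_{\mathrm{tail}} + (p - 1 - i)$ for any uniform upper bound $M_{\mathrm{tail}}$ on $\{m_{p-j} : j = 1, \ldots, \pi_c\}$.

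The most delicate step is establishing such a uniform $M_{\mathrm{tail}}$. Each $f^{p-j}(E)$, being a non-periodic element of the finite set $f^{-j}(f^p(E))$, has only finitely many possible values; a priori, some of these preimages could sit inside $\widehat{f^p(E)}$ and have arbitrarily large separation depth from $f^p(E)$, which would make $N(c)$ blow up linearly in $p$. Here the Cui--Peng model $(g,V)$ is essential: each non-trivial boundary component of $V$ bounds an eventually superattracting Fatou domain of $g$ containing no Julia component in its interior, so via the conformal conjugacy $\varphi$ the region $\widehat{E_c}$ in $U$ contains no non-trivial Julia components of $f$ other than $E_c$ itself. This forces every non-periodic preimage $f^{p-j}(E)$ to sit outside $\widehat{f^p(E)}$, hence in a finite $p, E$-independent list whose maximum separation depth from $f^p(E)$ depends only on $f$. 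Setting $N := \max\{M_{\mathrm{tail}}, \max_{c \in U} n_c\}$, the inequality $m_i \le M_{\mathrm{tail}} + (p-1-i)$ forces $m_i \le n + p - i$ for all $i$ whenever $n > N$, so $N(c) = 0$ throughout and the degree is uniformly bounded independently of $p$ and $E$. The hardest part I anticipate is transferring the ``no non-trivial sub-Julia-component inside $\widehat{E_c}$'' property from the model to the original dynamics and then reconciling it with the period-jump inequality to close the uniform bound on $M_{\mathrm{tail}}$.
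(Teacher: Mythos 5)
Your overall strategy (factor the degree via Riemann--Hurwitz, bound the number of times the pullback chain of puzzle pieces meets each critical point) is essentially the same reduction the paper makes, but your execution has a real gap that leads to a conclusion that is in fact false.

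The gap is in the terminal claim $N(c)=0$ for every critical point $c$, which would force the degree to equal $1$. This is too strong: if $c\notin U$ lies in $\widehat{f^{i_0}(E)}$ for some $i_0\in\{1,\ldots,p-1\}$ (i.e.\ the Julia component $E_c$ through $c$ equals $f^{i_0}(E)$ itself), then by definition $c\in P_{i_0}=P(\gamma_{n+p-i_0}(f^{i_0}(E)))$ for every $n$, so $N(c)\geq 1$ no matter how large $n$ is. In your separation-depth bookkeeping this manifests as $m_{i_0}=\infty$, which your bound $m_i\leq M_{\mathrm{tail}}+(p-1-i)$ cannot accommodate. This is precisely the scenario the paper isolates in its Case~II (critical points in $J_f$, via the finite set $\mathcal{E}_{pc}$ of Julia components meeting $P_f$): the paper does not show the chain avoids $\mathcal{E}_{pc}$, but only that it can meet \emph{each} component of $\mathcal{E}_{pc}$ at most once for $n$ large, using disjointness of the puzzle pieces of the (finitely many) components of $\mathcal{E}_{pc}$. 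The correct conclusion is a uniform bound on $N(c)$ (typically $N(c)\leq 1$ or $2$), not $N(c)=0$.

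A second problem is the transfer step: the statement that ``$\widehat{E_c}$ contains no non-trivial Julia component of $f$ other than $E_c$ itself'' is obtained by pulling a property of $(g,V)$ back through $\varphi$. But $\varphi$ is only a conformal conjugacy on $U$; it tells you nothing about the dynamics or topology of $f$ inside $\widehat{E_c}\subset\widehat{\mathbb{C}}\setminus U$. The Cui--Peng model $(g,V)$ is built so that each $\widehat{Q}$ is a superattracting Fatou domain of $g$, but the corresponding $\widehat{E}$ for $f$ may have very different interior structure, and no conjugacy relates the two there. So even setting aside point one, your argument for the uniform bound on $M_{\mathrm{tail}}$ does not close. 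The paper instead uses only properties intrinsic to $f$ (geometric finiteness, finiteness of $\widetilde{\mathcal{E}}$, finiteness of $\mathcal{E}_{pc}$) and does not invoke the model $(g,V)$ in this lemma at all; you should do the same.
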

\begin{proof}
	Since $\widetilde{\mathcal{E}}$ and the critical points of $f$ in $U$ are both finite sets,  
	there exists some integer $N_0>0$ such that the sequence
	\begin{equation}\label{Puzzle-f}
		P(\gamma_{n+p}(E)),P(\gamma_{n+p-1}f(E)),\cdots,P(\gamma_{n+1}(f^{p-1}(E)))
	\end{equation}
    do not contain any components in $\widetilde{\mathcal{E}}$ or any critical points of $f$ in $U$ for all $n>N_0$.
	Then we only need to deal with the critical points in $\widehat{\mathbb{C}}-U$. Let $F_f$ be the Fatou set of $f$.
	
	{\bf Case I: The critical points in $F_f-U$.}\par
	Note that $E$ is strictly  eventually periodic, so the critical points in periodic Fatou components will not be contained in the sequence (\ref{Puzzle-f}). 
	On the other hand, each critical point in a strictly eventually periodic Fatou component will lie in some periodic Fatou component within finite iterations under $f$, 
    hence its critical orbit is a finite set outside the periodic Fatou components. 
	Then there exists an integer $N_1>0$, the sequence (\ref{Puzzle-f}) only meets such critical points at most once for the depth $n>N_1$.
	
	{\bf Case II: The critical points in $J_f$.}\par 
	Since $P_f\cap J_f$ is a finite set due to $f$ is geometrically finite, the critical points in $J_f$ are all eventually periodic. 
	We let $\mathcal{E}_{pc}$ be the collection of Julia components containing the points in $P_f$. 
	It is possible that the sequence (\ref{Puzzle-f}) may contain the components in $\mathcal{E}_{pc}$.
	With no loss of generality, we assume that 
	$$E_{pc}\subset P(\gamma_{n+p-j_0}(f^{j_0}(E)))$$
	for some integer $0<j_0<p$ and some $E_{pc}\in\mathcal{E}_{pc}$.
	This means $E_{pc}$ and $f^{j_0}(E)$ are in the same puzzle piece, which is  $$P(\gamma_{n+p-j_0}(E_{pc}))=P(\gamma_{n+p-j_0}(f^{j_0}(E))).$$ 
	Then we have
	$$P(\gamma_{n+p-j}(f^{j-j_0}(E_{pc})))=P(\gamma_{n+p-j}(f^{j}(E)))$$
	for $j>j_0$.
	
	Note that $f^{j-j_0}(E_{pc})\in\mathcal{E}_{pc}$ and $\mathcal{E}_{pc}$ is a finite set, we can choose a positive integer $N_2$ such that the puzzle pieces of the Julia components in $\mathcal{E}_{pc}$ are disjoint from each other for depth more than $N_2$. 
	Hence we have
	$$P(\gamma_{n+p-j_0}(E_{pc}))\cap P(\gamma_{n+p-j}(f^{j-j_0}(E_{pc})))=\emptyset$$
    for all $j_0<j<p$ and $n>N_2$.
    This implies
	\begin{equation}\label{pf}
		P(\gamma_{n+p-j_0}(E_{pc}))\cap P(\gamma_{n+p-j}(f^j(E)))=\emptyset,\ \ j_0<j<p,\ n>N_2.
	\end{equation}
	Due to the arbitrariness of $j_0$, (\ref{pf}) shows that the sequence (\ref{Puzzle-f}) can only intersect each component in $\mathcal{E}_{pc}$ at most once for all $n>N_2$.
	
	We take $N=\max\{N_0,N_1,N_2\}$. Then the number of critical puzzle pieces in the sequence (\ref{Puzzle-f}) is uniformly bounded for any positive integer $p$ and for any $n>N$. This implies the degree
	$$\deg\big(f^{p}:P\left(\gamma_{n+p}(E)\right)\rightarrow P\left(\gamma_{n}(f^p(E))\right)\big)$$
	is bounded by some constant $D$, which is independent of $p$ and the choice of $E$.
	Thus we have finished the proof.
\end{proof}

\begin{prop}\label{f-g}
    Let $E$ be a Julia component of $f$, then $Q=\Psi(E)$ is a Julia component of $g$. We have
	\begin{itemize}
		\item if $E$ is a $p-$periodic component of $f$, 
		then $Q$ is a $p-$periodic component of $g$;
		\item if $E$ is an eventually periodic component of $f$, 
		then $Q$ is an eventually periodic component of $g$. 
		Moreover, there is a positive integer $m$ such that $f^m(E)$ and $g^m(Q)$ are both periodic components, but $f^{m-1}(E)$ and $g^{m-1}(Q)$ are not;
		\item if $E$ is a wandering component $f$, 
		then $Q$ is a wandering component of $g$.
	\end{itemize}
\end{prop}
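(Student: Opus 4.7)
The plan is to first establish the semi-conjugacy
$$\Psi(f(E)) = g(\Psi(E)) \quad \text{for every } E\in\mathcal{E},$$
and then read off the three bullet points using the bijectivity of $\Psi:\mathcal{E}\to\mathcal{Q}$ already recorded at the end of Section \ref{sec3}.

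For the semi-conjugacy I would first track boundaries under $f$: because $f(U_{n+1})=U_n$ and $\gamma_{n+1}(E)$ is the component of $\partial U_{n+1}$ separating $E$ from $\infty$, its $f$-image must be the component of $\partial U_n$ separating $f(E)$ from $f(\infty)=\infty$, so $f(\gamma_{n+1}(E))=\gamma_n(f(E))$. Applying the conjugacy $g\circ\varphi=\varphi\circ f$ on $U$ yields $g(\beta_{n+1}(E))=\beta_n(f(E))$ with $\beta_n(E):=\varphi(\gamma_n(E))$. Because $V$ is completely $g$-invariant, the restriction $g:V_{n+1}\to V_n$ is a proper branched covering, so it matches the connected components of $\widehat{\mathbb{C}}\setminus V_{n+1}$ with those of $\widehat{\mathbb{C}}\setminus V_n$; boundary tracking then forces $g\bigl(\overline{I(\beta_{n+1}(E))}\bigr)=\overline{I(\beta_n(f(E)))}$. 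Intersecting the nested compact family over $n$ and using continuity of $g$, I obtain
$$g\bigl(\widehat{\Psi(E)}\bigr)=\bigcap_{n\geq 1}g\bigl(\overline{I(\beta_n(E))}\bigr)=\bigcap_{n\geq 1}\overline{I(\beta_{n-1}(f(E)))}=\widehat{\Psi(f(E))}.$$
Combined with $g(\widehat{Q})=\widehat{g(Q)}$ for every $Q\in\mathcal{Q}$, this gives $\widehat{g(\Psi(E))}=\widehat{\Psi(f(E))}$; taking boundaries (noting $\partial\widehat{A}=A$ for any Julia component $A$) yields $g(\Psi(E))=\Psi(f(E))$.

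The three bullet points now follow immediately. If $f^p(E)=E$ with $p$ minimal, then $g^p(\Psi(E))=\Psi(E)$; any smaller period $k$ would give $\Psi(f^k(E))=\Psi(E)$, hence $f^k(E)=E$ by injectivity of $\Psi$, contradicting minimality of $p$. The strictly eventually periodic case is the same argument applied to $f^m(E)$, with injectivity ruling out any smaller pre-period. For wandering $E$, distinctness of $\{f^n(E)\}_{n\geq 0}$ propagates through the injective $\Psi$ to distinctness of $\{g^n(\Psi(E))\}=\{\Psi(f^n(E))\}$, so $\Psi(E)$ is wandering.

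The main technical delicacy is the intermediate step $g\bigl(\overline{I(\beta_{n+1}(E))}\bigr)=\overline{I(\beta_n(f(E)))}$, whose $f$-side analogue can genuinely fail when $E\in\widetilde{\mathcal{E}}$. Fortunately the obstruction evaporates on the $g$-side precisely because $V$ is completely $g$-invariant, so no bounded puzzle piece of the model can be unfolded over the whole sphere; this is, in essence, the reason the authors of \cite{C-P} pass to the holomorphic model $(g,V)$ in the first place. If uniform control of the covering degree is needed anywhere in the details, the $g$-analogue of Lemma \ref{f-deg} supplies it.
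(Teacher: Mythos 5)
Your proof is correct and follows essentially the same route as the paper: both establish the intertwining relation $\Psi\circ f = g\circ\Psi$ (the paper works directly with $g^p(\varphi(\gamma_n(E)))=\varphi(\gamma_{n-p}(f^p(E)))$ and passes to the nested intersection, while you do $p=1$ and iterate), and both then invoke the bijectivity of $\Psi$ to transfer period, pre-period, and wandering behavior. Your treatment is a bit more explicit than the paper's about why the puzzle-piece pushforward $g\bigl(\overline{I(\beta_{n+1}(E))}\bigr)=\overline{I(\beta_n(f(E)))}$ is unconditional on the $g$-side (complete invariance of $V$) even though the $f$-side analogue can fail for $E\in\widetilde{\mathcal{E}}$, and you replace the paper's brief ``the above process can be reversed since $\varphi$ is conformal'' with the cleaner appeal to injectivity of $\Psi$; otherwise the arguments coincide.
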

\begin{proof}
	Recall that $(f,U)$ and $(g,V)$ are holomorphic conjugate, 
	i.e. $\varphi\circ f=g\circ\varphi$ on $U$, we have
	$$g^p(\varphi(\gamma_n(E)))=\varphi(f^p(\gamma_n(E)))=\varphi(\gamma_{n-p}(f^p(E))).$$
	Then
	\begin{equation}\label{p-q}
		\partial\left(\bigcap_{n>p}I(g^p(\varphi(\gamma_{n}(E))))\right)=\partial\left(\bigcap_{n>p}I(\varphi(\gamma_{n-p}(f^p(E))))\right).
	\end{equation}
	By (\ref{Psi}), the right side of (\ref{p-q}) can be written into
	\begin{equation}
		\partial\left(\bigcap_{n>p}I(\varphi(\gamma_{n-p}(f^p(E))))\right)
		=\Psi\left(\partial\left(\bigcap_{n>p}P(\gamma_{n-p}(f^p(E)))\right)\right)
		=\Psi(f^p(E)).
	\end{equation}
	Since $\varphi(\gamma_n(E))=\beta_n(Q)$, the left side of (\ref{p-q}) can be transformed to
	\begin{equation}
		\begin{aligned}
			\partial\left(\bigcap_{n>p}I(g^p(\varphi(\gamma_{n}(E))))\right)
			&=\partial\left(\bigcap_{n>p}I(g^p(\beta_{n}(Q)))\right)\\
			&=\partial\left(\bigcap_{n>p}I(\beta_{n-p}(g^p(Q)))\right)\\
			&=g^p(Q).
		\end{aligned}
	\end{equation}
	Thus we have $\Psi(f^p(E))=g^p(Q)$. 
	Note that $E$ is $p-$periodic, then
	\begin{equation}\label{p}
		g^p(Q)=\Psi(f^p(E))=\Psi(E)=Q.
    \end{equation}
	The above process can be reversed due to $\varphi$ is conformal from $U$ to $V$. 
	Hence $p$ is the smallest integer that satisfying (\ref{p}), 
	which implies $Q$ is a $p-$periodic Julia component of $g$.
	
	Furthermore, we have $g^m(Q)=\Psi(f^m(E))$ for any positive integer $m$. 
	The rest part of proof is natural.
\end{proof}

\vskip0.15in

We now apply Lemma \ref{f-deg} and Proposition \ref{f-g} to estimate the degrees of the iterations of $g$. 
Since $\deg_{\gamma_n(E)}(f)\geq\deg_{\gamma_{n+1}(E)}(f)$ for each $E\in\mathcal{E}$, 
we have $\deg_{\gamma_n(E)}(f)$ converges to an integer, denoted by $\deg_{E}(f)$, as $n\rightarrow\infty$.
Similarly, $\deg_{\beta_n(Q)}(g)$ converges to an integer for each $Q\in\mathcal{Q}$, denoted by $\deg_{Q}(g)$, as $n\rightarrow\infty$. 
Moreover, \cite{C-P} shows that
$$\deg_E(f)=\deg_{\Psi(E)}(g).$$

\begin{lemma}\label{g-deg}
	Let $\mathcal{Q}_{per}$ be the collection of the periodic Julia components of $g$.
	Suppose that $Q$ is an eventually periodic Julia component of $g$, with
	$g^{p}(Q)\in\mathcal{Q}_{per}$ and $g^{p-1}(Q)\notin\mathcal{Q}_{per}$	
	for some positive integer $p$.	
	Then there exists a sufficiently large positive integer $N$ such that
	$$\deg(g^{p}:I(\beta_{n+p}(Q))\rightarrow I(\beta_{n}(g^{p}(Q))))$$
	is bounded by some constant $D$, which is independent of $p$ and $Q$, for all $n>N$.
\end{lemma}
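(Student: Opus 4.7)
The plan is to reduce the assertion to Lemma \ref{f-deg} via the bijection $\Psi : \mathcal{E} \to \mathcal{Q}$ and the conjugacy $\varphi$, so no new case analysis of critical orbits of $g$ is needed.

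Given $Q$ satisfying the hypotheses, I would set $E = \Psi^{-1}(Q)$. By Proposition \ref{f-g}, $E$ is an eventually periodic Julia component of $f$ with $f^p(E) \in \mathcal{E}_{per}$ and $f^{p-1}(E) \notin \mathcal{E}_{per}$, so Lemma \ref{f-deg} applies to $E$ and produces a threshold $N$ together with a universal bound $D$ (independent of $p$ and of $E$) such that
$$\deg\bigl(f^p : P(\gamma_{n+p}(E)) \to P(\gamma_n(f^p(E)))\bigr) \leq D$$
for every $n > N$. It then suffices to identify this degree with the degree of $g^p : I(\beta_{n+p}(Q)) \to I(\beta_n(g^p(Q)))$.

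The main obstacle is that $\varphi$ is defined only on $U$, so the two proper maps cannot be compared directly on the interiors of the puzzle pieces; the comparison must pass through the boundary curves, where $\varphi$ is available. Using $\partial U_0 \cap P_f(U) = \emptyset$ together with the forward invariance of $P_f(U)$, one checks that $C(f,U) \cap \partial U_m = \emptyset$ for every $m \geq 1$: any $c \in C(f,U) \cap \partial U_m$ would force $f^m(c) \in P_f(U) \cap \partial U_0$, which is empty. Consequently $f^p : \gamma_{n+p}(E) \to \gamma_n(f^p(E))$ is an unramified covering of Jordan curves, and its topological degree equals the degree of $f^p$ as a proper holomorphic map on $P(\gamma_{n+p}(E))$. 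The same reasoning applied to $g$ on $V$ yields the analogous identity on the $g$-side. Since $\varphi \circ f^p = g^p \circ \varphi$ holds on $\gamma_{n+p}(E)$ and $\varphi$ is a homeomorphism sending $\gamma_k$ to $\beta_k$, the two topological degrees coincide, and the desired bound for $g^p$ follows with the same constant $D$ (and the same $N$).
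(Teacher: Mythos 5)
Your reduction is genuinely different from the paper's argument and, where it applies, is cleaner. The paper splits into two cases: for $Q\notin\widetilde{\mathcal{Q}}$ it invokes the identity $\deg_E(f)=\deg_{\Psi(E)}(g)$ from \cite{C-P} together with the eventual stabilization of $\deg_{\gamma_n(E)}(f^p)$ and $\deg_{\beta_n(Q)}(g^p)$, and then cites Lemma~\ref{f-deg}; for $Q\in\widetilde{\mathcal{Q}}$ it argues separately by finiteness of $\widetilde{\mathcal{Q}}$. You instead equate the two degrees directly for \emph{every} $n$, by noting that $\varphi$ conjugates the unramified boundary coverings $f^p\colon\gamma_{n+p}(E)\to\gamma_n(f^p(E))$ and $g^p\colon\beta_{n+p}(Q)\to\beta_n(g^p(Q))$, so no stabilization and no appeal to the \cite{C-P} degree identity are needed, and the threshold $N$ is just the one from Lemma~\ref{f-deg}. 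The observation $C(f,U)\cap\partial U_m=\emptyset$, via $f^m(\partial U_m)=\partial U_0$ and $\partial U_0\cap P_f(U)=\emptyset$, is correct and is the right justification for the covering being unramified.

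There is, however, a gap at exactly the place where the paper puts its Case II. You assert that the topological degree of the boundary covering on the $f$-side ``equals the degree of $f^p$ as a proper holomorphic map on $P(\gamma_{n+p}(E))$.'' When $E=\Psi^{-1}(Q)$ (or some forward iterate $f^j(E)$ with $0\le j<p$) lies in $\widetilde{\mathcal{E}}$, one has $f(\widehat{E})=\widehat{\mathbb{C}}$, so $f^p$ does not map $P(\gamma_{n+p}(E))$ into $P(\gamma_n(f^p(E)))$ and the restriction is not a proper map between these Jordan disks; the argument-principle identification you are using breaks down, and indeed Lemma~\ref{f-deg} is only unambiguously applicable when the orbit of $E$ avoids $\widetilde{\mathcal{E}}$. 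Your conjugacy argument still gives equality of the \emph{boundary} covering degrees in this case, and the $g$-side puzzle maps are always proper since $V$ is completely invariant, so the route can be repaired — but as written you cannot simply cite Lemma~\ref{f-deg} with its ``proper map'' degree when $Q\in\widetilde{\mathcal{Q}}$ or its orbit passes through $\widetilde{\mathcal{Q}}$. You should either treat that (finite) family of components separately, as the paper does, or make explicit that the quantity controlled by Lemma~\ref{f-deg} is the boundary covering degree and justify the bound in the $\widetilde{\mathcal{E}}$ case.
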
	
\begin{proof}
	Denote $\widetilde{\mathcal{Q}}=\{\Psi(E):\ E\in\widetilde{\mathcal{E}}\}$.
	
    {\bf Case I: Let $Q\in\mathcal{Q}-\widetilde{\mathcal{Q}}$ be eventually periodic.}
	Suppose that $Q=\Psi(E)$ for some eventually periodic component $E\in\mathcal{E}-\widetilde{\mathcal{E}}$, 
	then we have $f^{p}(E)\in\mathcal{E}_{per}$, $f^{p-1}(E)\notin\mathcal{E}_{per}$ by Proposition \ref{f-deg},
	and $$\deg_E(f^p)=\deg_Q(g^p).$$	 
	Thus there is a sufficiently large positive integer $N_0$ such that
	$$\deg_{\gamma_{n+p}(E)}(f^p)=\deg_E(f^p)=\deg_Q(g^p)=\deg_{\beta_{n}(Q)}(g^p)$$
	for all $n>N_0$.
	Applying Lemma \ref{f-deg}, there is an integer $N_1>0$, such that the degree $\deg_{\beta_{n}(Q)}(g^p)$ is also bounded by some constant $D$ for all $n>N_1$.
	
	{\bf Case II: Let $Q\in\widetilde{\mathcal{Q}}$ be eventually periodic.}
	In this case, $Q$ will lie in $\mathcal{Q}_{per}$ within finite iterations. Let $N_2>0$ be some integer such that the sequence 
	\begin{equation}\label{Puzzle-g}
		I(\gamma_{n+p}(Q),I(\gamma_{n+p-1}g(Q))),\cdots,I(\gamma_{n+1}(g^{p-1}(Q)))
	\end{equation}
    contain no critical points of $g$ in $V$ for all $n>N_2$.
	Since $\widetilde{\mathcal{Q}}$ is finite, the degrees of such iterations are of course bounded.
	
	\vskip0.15in
	Hence the proof is completed by taking $N=\max\{N_0,N_1,N_2\}$.
\end{proof}

\section{Bounded turning on non-trivial Julia components}\label{sec5}
The aim of this section is to prove the following:
\begin{thm}\label{uni-qc}
	Let $U$ be an infinitely connected attracting Fatou domain of a geometrically finite rational map $f$. 
	Then $U$ is conformally homeomorphic to a domain,
	whose boundary components are uniform quasicircles and points.
\end{thm}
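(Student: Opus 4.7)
The plan is to verify, via Ahlfors's characterization (Theorem~\ref{Ahlfors}), that every non-trivial boundary component of $V$ is a $K$-quasicircle with a uniform $K$; trivial components are single points and need no further treatment. By the Cui--Peng holomorphic model $(g,V)$ recalled in Section~\ref{sec3}, each non-trivial boundary component of $V$ is a quasicircle bounding an eventually superattracting Fatou domain of $g$, so every non-trivial Julia component of $g$ is eventually periodic.

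As a first step, the set $\mathcal{Q}_{per,nt}$ of periodic non-trivial Julia components of $g$ is finite: every cycle in $\mathcal{Q}_{per,nt}$ bounds a cycle of superattracting Fatou components, and every superattracting cycle contains a critical point of $g$, of which there are only finitely many. Each such $Q$ is already a quasicircle, so Theorem~\ref{Ahlfors} yields a bounded turning constant; the maximum over this finite set provides a uniform constant $K_0$ for the periodic case.

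For an eventually periodic non-trivial component $Q$ with $g^{m}(Q)=Q'\in\mathcal{Q}_{per,nt}$ and $g^{m-1}(Q)\notin\mathcal{Q}_{per,nt}$, Lemma~\ref{g-deg} supplies a depth threshold $N$ and a uniform degree bound $D$ on $g^{m}\colon I(\beta_{n+m}(Q))\to I(\beta_{n}(Q'))$ for all $n>N$. For the modulus hypothesis of Theorem~\ref{turning-distortion}, I take $V_2=I(\beta_{N-1}(Q'))$ and $U_2=I(\beta_{N}(Q'))$; since $Q'$ ranges over the finite set $\mathcal{Q}_{per,nt}$ and $N$ is fixed, the moduli $\operatorname{mod}(V_2\setminus\overline{U_2})$ assume only finitely many positive values, giving a uniform lower bound $m_0>0$. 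Theorem~\ref{turning-distortion} then produces a uniform distortion constant $D_0=D(D,m_0)$, independent of $Q$ and $m$, which serves as the engine for transferring turning estimates on $Q'$ back to $Q$.

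The principal obstacle is converting this pulled-back distortion into the subarc-based bounded turning condition of Theorem~\ref{Ahlfors}. Applying turning distortion with $K=L(p,q)$, the smaller subarc of $Q$ joining $p$ and $q$, gives
\[\Delta(L(p,q);p,q)\leq D_0\,\Delta\big(g^{m}(L(p,q));g^{m}(p),g^{m}(q)\big),\]
but $g^{m}(L(p,q))$, merely a connected subset of $Q'$ containing $g^{m}(p),g^{m}(q)$, need not be the smaller subarc of $Q'$ between them; it may be the larger subarc or wrap around $Q'$ when the branched cover $g^{m}\colon Q\to Q'$ folds $L(p,q)$ back on itself. I anticipate resolving this by lifting from the $Q'$-side: starting with $\ell'=L(g^{m}(p),g^{m}(q))\subset Q'$ and taking the component $\widetilde{\ell}$ of $(g^{m}|_Q)^{-1}(\ell')$ containing $p$, one applies the turning distortion directly to $\widetilde{\ell}$ to obtain a uniform bound on $\Delta(\widetilde{\ell};p,\cdot)$, then combines this with the minimal-diameter property of $L(p,q)$ and the uniform degree bound $D$ to handle the remaining configurations where $q$ lies in a different fiber-component. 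The resulting uniform quasicircle constant, together with the trivial boundary components being points, yields Theorem~\ref{uni-qc}.
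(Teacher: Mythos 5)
Your overall structure (exploit finiteness of the periodic non-trivial Julia components, then pull back the bounded-turning property to eventually periodic components via turning distortion) matches the paper's, and you are right that the difficulty is converting the pulled-back distortion into the Ahlfors subarc condition. But the resolution you sketch does not close the gap, and the paper's own solution is based on an idea you did not use.

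The paper does not attempt to run Theorem~\ref{turning-distortion} with the degree bound $D$ from Lemma~\ref{g-deg} on the map $g^m$ all the way to the periodic component. Instead it enlarges the finite ``base'' set: using that $d(p)=\max\{\deg(g^p:\widehat{Q}\to\widehat{g^p Q}):Q\in\mathcal{Q}^{\dag}_{per}[p]\}$ is non-decreasing and bounded (by Lemma~\ref{g-deg}), it picks $M$ with $d(p)=d(M)$ for $p>M$ and sets $\mathcal{Q}^*=\mathcal{Q}^{\dag}_{per}\cup\bigcup_{p\le M}\mathcal{Q}^{\dag}_{per}[p]$, still a finite collection. The crucial consequence is that for every $Q^{\dag}\in\mathcal{Q}^{\dag}\setminus\mathcal{Q}^*$ the first-entry map $g^{p_{\dag}}:\widehat{Q^{\dag}}\to\widehat{g^{p_{\dag}}(Q^{\dag})}$ into $\mathcal{Q}^*$ is a \emph{holomorphic homeomorphism} (degree one on the filled Julia component and, for large depth, on the puzzle pieces too). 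With degree one there are exactly two possibilities for the image of the shorter subarc $L(p,q)$ under $g^{p_{\dag}}$: it is the shorter or the longer of the two subarcs of the target quasicircle. Lemma~\ref{mainlem} handles both cases (when it is the longer one, replace $L(p,q)$ by its complementary arc, which has larger diameter but whose image is the shorter subarc), and the uniform constant is $D(1,m)K^*$ with $m$ a uniform modulus taken over the finitely many $Q\in\mathcal{Q}^*$.

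In your version, with degree $D>1$, the map $g^m|_Q\colon Q\to Q'$ is a branched/covering map of circles of degree up to $D$, and $g^m(L(p,q))$ can wrap all the way around $Q'$, so neither the image of $L(p,q)$ nor the image of its complement is a subarc of $Q'$. The ``lift from the $Q'$-side'' idea then runs into exactly the scenario you acknowledge: $q$ may lie in a different component of $(g^m|_Q)^{-1}(\ell')$ than $p$, and you have no control on $\Delta(L(p,q);p,q)$ from the component $\widetilde\ell$ alone. Saying you ``anticipate resolving this'' by combining the minimal-diameter property with the degree bound is not a proof step; the configurations where the preimage has several components are precisely where the naive estimate breaks down, and it is not clear how to complete the argument without the degree-one reduction. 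So you should replace the degree-$D$ analysis by the stabilization-of-$d(p)$ argument and the resulting degree-one return map, after which Lemma~\ref{mainlem} (or the equivalent two-case computation) finishes the proof cleanly. Aside from this, your treatment of Step I (the finite set) and of the trivial boundary components is fine.
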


Recall that $(f,U)$ has a holomorphic model $(g,V)$, one possible route is to prove that $V$ is a desired domain that can be uniformized. 
We now consider the shape of the non-trivial Julia components of $g$. According to \cite{C-P}, we have the following proposition.
\begin{prop}\label{wandering}
	Each wandering Julia component of a rational map with a completely invariant attracting Fatou domain is a singleton.
\end{prop}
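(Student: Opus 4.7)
The plan is to show $\widehat{Q} = \bigcap_{n\geq 0} I(\beta_n(Q))$ is a single point by demonstrating that the conformal modulus of $I(\beta_0(Q))\setminus \widehat{Q}$ is infinite. I would introduce the disjoint nested annuli $A_n(Q) = I(\beta_n(Q)) \setminus \overline{I(\beta_{n+1}(Q))}$ separating $\widehat{Q}$ from $\partial I(\beta_0(Q))$. By the Gr\"otzsch inequality,
\[\operatorname{mod}\bigl(I(\beta_0(Q))\setminus \widehat{Q}\bigr)\;\geq\;\sum_{n\geq 0}\operatorname{mod}\bigl(A_n(Q)\bigr),\]
and the left-hand side is infinite precisely when $\widehat{Q}$ is a singleton. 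It therefore suffices to establish a uniform lower bound $\operatorname{mod}(A_n(Q)) \geq c > 0$ for all $n$.

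Next I would push each annulus forward via $g^n$: because $g$ carries every depth-$(k+1)$ puzzle piece onto a depth-$k$ piece, one has $g^n(A_n(Q)) = A_0(g^n(Q))$, and the restriction $g^n|_{A_n(Q)}$ is a branched covering of degree at most $d_n := \deg(g^n|_{I(\beta_n(Q))})$. Consequently $\operatorname{mod}(A_n(Q)) \geq \operatorname{mod}(A_0(g^n(Q)))/d_n$. Since $\Gamma_0$ and $\Gamma_1$ are both finite, only finitely many non-degenerate annuli occur among $\{A_0(R) : R \in \mathcal{Q}\}$, so $\operatorname{mod}(A_0(g^n(Q))) \geq c_0 > 0$ uniformly in $n$. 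The whole task reduces to a uniform bound on $d_n$.

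This bounded-degree estimate is the wandering analogue of Lemma \ref{g-deg}, and is where the wandering hypothesis plays the decisive role. Critical points of $g$ lying in $V$ eventually enter $V_m$ and hence stop ramifying puzzle pieces past some depth $M_0$, while the remaining critical points assemble into a finite collection $\mathcal{Q}^{\ast}$ of eventually periodic Julia components encoding their forward orbits. Because $Q$ is wandering, $\{g^k(Q)\}_{k\geq 0}$ must be disjoint from $\mathcal{Q}^{\ast}$, since any visit would force $Q$ itself to be eventually periodic. The separation argument of Case II in Lemma \ref{f-deg}, applied to the pullback sequence $\{I(\beta_{n-k}(g^k(Q)))\}_{k=0}^{n-1}$, then ensures that each $Q^{\ast} \in \mathcal{Q}^{\ast}$ can coincide with a term of the sequence at most once, provided the depth exceeds some threshold $N_2$. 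Together with the trivially bounded contributions from the finitely many shallow depths, this gives $d_n \leq D$ uniformly, and hence $\operatorname{mod}(A_n(Q)) \geq c_0/D$ for all $n$, completing the proof.

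The main obstacle is to make the separation argument truly uniform across all wandering components. For an eventually periodic $Q$ the orbit cycles through a fixed list of Julia components and $d_n$ can grow without bound, because a trapped critical point ramifies once per period; the wandering hypothesis precisely forbids this. To turn it into a uniform bound, however, one must verify that both $\mathcal{Q}^{\ast}$ and the separation depth $N_2$ are determined by the map $g$ alone, independently of the choice of wandering $Q$. This uniformity parallels the finiteness and separation statements used in Lemmas \ref{f-deg} and \ref{g-deg}, and is the technical heart of the argument.
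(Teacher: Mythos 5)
The paper does not actually prove Proposition~\ref{wandering}: it cites the result from \cite{C-P} and immediately applies it to $g$. So there is no in-paper argument for your attempt to be measured against; what follows is an assessment of the attempt on its own terms.

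There are two issues. The minor one is the push-forward identity $g^n(A_n(Q)) = A_0(g^n(Q))$. The preimage $(g^n)^{-1}\bigl(\overline{I(\beta_1(g^n(Q)))}\bigr)$ inside $I(\beta_n(Q))$ is a disjoint union of closed disks, only one of which is $\overline{I(\beta_{n+1}(Q))}$; the remaining preimage disks lie in $A_n(Q)$, so $g^n(A_n(Q))$ will in general be all of $I(\beta_0(g^n(Q)))$, not the annulus $A_0(g^n(Q))$. The conclusion you want, $\operatorname{mod}(A_n(Q)) \ge \operatorname{mod}(A_0(g^n(Q)))/d_n$, is still correct, but it must be obtained from the standard pullback lemma for a proper map of nested disk pairs rather than from the (false) identity you invoke. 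This is fixable.

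The serious gap is the uniform bound on $d_n$, and you yourself flag it as ``the technical heart.'' It is not merely a uniformity to be verified --- the proposed route does not close. First, the proposition is stated for an \emph{arbitrary} rational map with a completely invariant attracting Fatou domain; there is no geometric finiteness hypothesis here, so there is no reason for the critical points in $\widehat{\mathbb{C}}\setminus V$ to lie in eventually periodic Julia components, and your finite collection $\mathcal{Q}^{\ast}$ need not exist. Case~II of Lemma~\ref{f-deg}, which you want to transplant, uses precisely $P_f\cap J_f$ finite and $E$ eventually periodic. Second, even granting such a finite critical list, the fact that a wandering $Q$ never \emph{equals} a critical Julia component $Q^{\ast}$ does not prevent the pullback sequence from visiting $Q^{\ast}$'s nest unboundedly often: if $g^k(Q)$ accumulates on a periodic $\widehat{Q^{\ast}}$ containing a critical point, then $g^k(Q)\in I(\beta_{m_k}(Q^{\ast}))$ with $m_k\to\infty$, and the depth-$n$ pullback picks up that critical point at every $k$ with $m_k>n-k$, so $d_n$ need not be bounded in $n$. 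Exactly this recurrence phenomenon is what makes the analogous statement for polynomials (the Branner--Hubbard conjecture) difficult, and its proof requires a much more careful selection of annuli than the naive nest $A_n(Q)$. So the bounded-degree/Gr\"otzsch argument as you set it up cannot be completed without new ideas; this is why the paper cites \cite{C-P} rather than reproving the proposition.
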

This implies any non-trivial Julia component of $g$ is non-wandering. 

\subsection{Classification of the non-trivial Julia components}
Denote $\mathcal{Q}^{\dag}$ by the collection of non-trivial Julia components of $g$.
We have already known that each wandering Julia component of $g$ is a singleton from Proposition \ref{wandering}.
Hence all the non-trivial Julia components of $g$ are eventually periodic. Moreover, we have the following lemma.
\begin{lemma}
	The non-trivial periodic Julia components of $g$ are finite.
\end{lemma}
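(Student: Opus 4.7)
The plan is to translate "finite number of non-trivial periodic Julia components" into "finite number of superattracting periodic cycles of $g$", and then bound the latter using the finiteness of the critical set of $g$.

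First I would invoke the Cui--Peng model property recalled in Section \ref{sec3}: each non-trivial boundary component of $V$ is a quasicircle bounding an eventually superattracting Fatou domain of $g$ which contains at most one postcritical point. Thus, for any non-trivial Julia component $Q\in\mathcal{Q}^{\dag}$, the set $\widehat{Q}$ is the closure of a Jordan domain (because $Q$ is a quasicircle), and $D_Q:=\operatorname{int}(\widehat{Q})$ is a single eventually superattracting Fatou component of $g$. The correspondence $Q\mapsto D_Q$ is injective.

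Next, suppose $Q$ is in addition $p$-periodic, i.e.\ $g^p(Q)=Q$. Using that $V$ is completely $g$-invariant, the branched covering $g^p$ sends $\widehat{\mathbb{C}}\setminus V$ into itself; in particular $g^p(\widehat{Q})=\widehat{g^p(Q)}=\widehat{Q}$. Therefore $g^p(D_Q)=D_Q$, so $D_Q$ is a periodic Fatou component of period dividing $p$. But a periodic Fatou component that is \emph{eventually} superattracting must in fact already lie on a superattracting cycle (the orbit has no transient portion, since it returns to itself). Hence every $Q\in\mathcal{Q}^{\dag}\cap\mathcal{Q}_{per}$ determines, via $Q\mapsto D_Q$, a Fatou component in a superattracting periodic cycle of $g$.

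The final step is a standard counting argument: each superattracting periodic cycle of Fatou components of $g$ contains at least one periodic critical point of $g$, and $g$ has at most $2\deg(g)-2$ critical points, so there are only finitely many superattracting periodic cycles. Since each such cycle consists of finitely many Fatou components, and the map $Q\mapsto D_Q$ is injective, there are only finitely many non-trivial periodic Julia components.

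The only real obstacle is the second step, where I must rigorously transfer periodicity from the Julia component $Q$ to the bounded Fatou domain $D_Q$, and then rule out the possibility that $D_Q$ is merely \emph{pre}-superattracting. The first requires using the complete invariance of $V$ together with the fact that a quasicircle has exactly two complementary Jordan components; the second is a general principle of Fatou dynamics (a periodic Fatou component cannot be strictly pre-periodic).
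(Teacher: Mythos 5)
Your proof is correct and follows the same route as the paper's terse two-sentence proof: identify each non-trivial periodic Julia component $Q$ with the superattracting Fatou domain it bounds, then bound the number of superattracting cycles by the number of critical points of $g$. The only difference is that you spell out the intermediate steps the paper leaves implicit --- injectivity of $Q\mapsto D_Q$, the transfer of periodicity from $Q$ to $D_Q$ via complete invariance of $V$, and the observation that a periodic Fatou component which is eventually superattracting must in fact be superattracting.
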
	
\begin{proof}
	Let $Q$ be a non-trivial periodic Julia component of $g$. 
	Since the interior of $Q$ is a supperattracting Fatou domain,
	and the critical points are finite, the proof is completed. 
\end{proof}

\vskip0.15in
Let $\mathcal{Q}^{\dag}_{per}=\{Q_1,Q_2,\cdots,Q_k\}$ be the collection of non-trivial periodic Julia components of $g$, which consists of finitely many quasicircles.
We also let
$$\mathcal{Q}^{\dag}_{per}[p]=\{Q:Q\in\mathcal{Q}^{\dag},\ Q\notin\mathcal{Q}^{\dag}_{per},\ g^p(Q)\in\mathcal{Q}^{\dag}_{per},\ g^{p-1}(Q)\notin\mathcal{Q}^{\dag}_{per}\}.$$
Denote
$$d(p)=\max\{\deg(g^p:\widehat{Q}\rightarrow\widehat{g^p(Q)}):\ Q\in\mathcal{Q}^{\dag}_{per}[p]\}.$$
Clearly, $d(p)$ increases with respect to $p$. Let the depth $n$ tends to infinity, 
then the degree $d(p)$ is bounded by some constant due to Lemma \ref{g-deg}. 
Hence there is an integer $M$ such that $d(p)=d(M)$ for all $p>M$. 
Denote
$$\mathcal{Q}^*=\mathcal{Q}^{\dag}_{per}\cup\left(\displaystyle\bigcup^M_{p=1}\mathcal{Q}^{\dag}_{per}[p]\right).$$
Given $Q^{\dag}\in\mathcal{Q}^{\dag}-\mathcal{Q}^*$, 
there must be an integer $p_{\dag}$, such that
\begin{equation}
	g^{p_{\dag}}(Q^{\dag})\in\mathcal{Q},\ g^{p_{\dag}-1}(Q^{\dag})\notin\mathcal{Q}.
\end{equation}
Moveover, we have 
$g^{p_{\dag}}:\widehat{Q^{\dag}}\rightarrow\widehat{g^{p_{\dag}}(Q^{\dag})}$
is a holomorphic homeomorphism. This fact will be used in the proof of Theorem \ref{uni-qc}.
\subsection{The proofs of Theorem \ref{uni-qc} and Theorem \ref{Fatou-Koebe}}
In preparation for the proofs, we first introduce the following lemma.
\begin{lemma}\label{mainlem}
	Let $V^*\subset\mathbb{C}$ be a Jordan disk and $S^*\subset V^*$ be a quasicircle. 
	We also let $V_\alpha\subset\mathbb{C}$ be Jordan disks and $S_\alpha\subset V_\alpha$ be quasicircles, 
	where $\alpha\in\Lambda$, $\Lambda$ is some index set.
	Suppose that $f_{\alpha}:V_\alpha\rightarrow V^*$ is a holomorphic homeomorphism and $f_{\alpha}(S_\alpha)=S^*$ for each $\alpha\in\Lambda$.
	Then $\{S_\alpha: \alpha\in\Lambda\}$ are uniform quasicircles.
\end{lemma}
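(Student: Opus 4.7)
The strategy is to combine the turning distortion estimate (Theorem~\ref{turning-distortion}) with Ahlfors' characterization of quasicircles (Theorem~\ref{Ahlfors}), transferring the bounded-turning constant of the single curve $S^*$ across every conformal homeomorphism $f_\alpha$ to obtain a uniform bounded-turning constant for the family $\{S_\alpha\}_{\alpha\in\Lambda}$.

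First I would thicken $S^*$ inside $V^*$: since $S^*$ is a compact subset of the open Jordan disk $V^*$, there exists a Jordan disk $\widetilde{U}$ with $S^*\subset\widetilde{U}$ and $\overline{\widetilde{U}}\subset V^*$, and $m:=\operatorname{mod}(V^*\setminus\overline{\widetilde{U}})>0$ is then a fixed positive number depending only on $S^*$ and $V^*$. Since each $f_\alpha:V_\alpha\rightarrow V^*$ is a holomorphic homeomorphism, it is a proper holomorphic map of degree one, and the preimage $\widetilde{U}_\alpha:=f_\alpha^{-1}(\widetilde{U})$ is a Jordan disk with $S_\alpha\subset\widetilde{U}_\alpha$ and $\overline{\widetilde{U}_\alpha}\subset V_\alpha$. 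By Theorem~\ref{Ahlfors}, the single quasicircle $S^*$ satisfies the $K_0$-bounded turning condition for some constant $K_0\geq 1$.

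Next, given $p,q\in S_\alpha$, denote by $L=L(p,q)$ the sub-arc of $S_\alpha$ joining $p$ and $q$ of smaller diameter, and by $L^*=L(f_\alpha(p),f_\alpha(q))$ the sub-arc of $S^*$ joining $f_\alpha(p)$ and $f_\alpha(q)$ of smaller diameter. Since $f_\alpha$ is bijective on $S_\alpha$, the pullback $L^{**}:=f_\alpha^{-1}(L^*)$ is a compact connected sub-arc of $S_\alpha\subset\widetilde{U}_\alpha$ with $f_\alpha(L^{**})=L^*$. Applying Theorem~\ref{turning-distortion} with $(V_1,U_1)=(V_\alpha,\widetilde{U}_\alpha)$, $(V_2,U_2)=(V^*,\widetilde{U})$, $g=f_\alpha$ (so $d=1$), and $K=L^{**}$ yields
$$\Delta(L^{**};p,q)\leq D(1,m)\,\Delta\big(L^*;f_\alpha(p),f_\alpha(q)\big)\leq D(1,m)\,K_0,$$
where the second inequality is precisely the $K_0$-bounded turning of $S^*$. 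Since $L$ is by definition the smaller-diameter sub-arc of $S_\alpha$ joining $p$ and $q$, one has $\diam(L)\leq\diam(L^{**})$ irrespective of whether $L^{**}$ coincides with $L$ or with its complementary sub-arc. Dividing by $|p-q|$ gives $\Delta(L;p,q)\leq D(1,m)K_0$, and a second application of Theorem~\ref{Ahlfors} then exhibits each $S_\alpha$ as a $D(1,m)K_0$-quasicircle with a constant independent of $\alpha$.

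The only delicate point is that the homeomorphism $f_\alpha$ need not send the smaller-diameter sub-arc of $S_\alpha$ to the smaller-diameter sub-arc of $S^*$, so naively pushing $L$ forward through $f_\alpha$ would not give direct access to the constant $K_0$. The remedy is to work with the pullback $L^{**}$ of the distinguished arc $L^*$; the defining property of $L$ then forces $\diam(L)\leq\diam(L^{**})$, after which the conclusion follows directly from Theorems~\ref{turning-distortion} and~\ref{Ahlfors}. Once this choice of sub-arc is made, no further obstacle remains.
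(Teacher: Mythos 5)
Your proof is correct and follows essentially the same route as the paper's: fix a modulus bound $m$ by thickening $S^*$ inside $V^*$, apply the degree-one turning distortion estimate (Theorem~\ref{turning-distortion}), and invoke Ahlfors' bounded-turning characterization (Theorem~\ref{Ahlfors}) on both ends. The only difference is organizational: where the paper splits into two cases according to whether $f_\alpha$ sends the smaller-diameter subarc $L(x_\alpha,y_\alpha)$ of $S_\alpha$ to the smaller-diameter subarc of $S^*$ or to its complement, you work directly with the pullback $L^{**}=f_\alpha^{-1}(L^*)$ and observe that $\diam(L)\leq\diam(L^{**})$ in either case, which merges the two cases into a single inequality chain.
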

\begin{proof}
	For any $x,y\in S^*$, let $L(x,y)\subset S^*$ be the subarc joining $x$ and $y$ with
	$$\operatorname{diam}(L(x,y))\leq\operatorname{diam}(S^*\setminus L(x,y)).$$
	By Theorem \ref{Ahlfors}, there is a constant $K^*$, such that $\Delta(L(x,y);x,y)\leq K^*$.
	
	Similarly, for any $\alpha\in\Lambda$ and for any $x_\alpha,y_\alpha\in S_\alpha$, 
	let $L(x_\alpha,y_\alpha)\subset S_\alpha$ be the subarc joining $x_\alpha$ and $y_\alpha$ with 
	$$\operatorname{diam}(L(x_\alpha,y_\alpha))\leq\operatorname{diam}(S_\alpha\setminus L(x_\alpha,y_\alpha)).$$
	In addition, we let $\Gamma(x_\alpha,y_\alpha)=S_\alpha\setminus L(x_\alpha,y_\alpha)$. 
	Since $f_\alpha$ is homeomorphic on $S_\alpha$, the quasiarc $f_\alpha(L(x_{\alpha},y_{\alpha}))$ must be the proper subset of $S$.
	
	Denote $2m=\operatorname{mod}(V^*\setminus\widehat{S^*})$, where $\widehat{S^*}$ is the union of $S^*$ and the connected component of $\widehat{\mathbb{C}}-S^*$ that disjoint from $\infty$. There exists a Jordan domain $U^*$, such that
	$$S^*\subset U^*\subset\overline{U}^*\subset V^*,\ \ \operatorname{mod}(V^*\setminus\overline{U}^*)\geq m.$$
	Let $U_{\alpha}=f^{-1}_{\alpha}(U^*)$. 
	It is also a Jordan domain and 
	$S_{\alpha}\subset U_\alpha\subset\overline{U}_\alpha\subset V_{\alpha}.$
	To apply Theorem \ref{turning-distortion}, the proof will be broken into two cases.
	
	{\bf Case I: $\operatorname{diam}(f_\alpha(L(x_{\alpha},y_{\alpha})))
		=\operatorname{diam}(L(f_\alpha(x_{\alpha}),f_\alpha(y_{\alpha}))).$}\par
	We have
	\begin{equation}
		\begin{aligned}
			\Delta(L(x_{\alpha},y_{\alpha});x_{\alpha},y_{\alpha})\leq& D(1,m)\Delta(f_\alpha(L(x_{\alpha},y_{\alpha}));f_\alpha(x_{\alpha}),f_\alpha(y_{\alpha}))\\
			=&D(1,m)\Delta((L(f_\alpha(x_{\alpha}),f_\alpha(y_{\alpha}));f_\alpha(x_{\alpha}),f_\alpha(y_{\alpha}))\\
			\leq&D(1,m)K^*.
		\end{aligned}
	\end{equation}
	
	{\bf Case II: $\operatorname{diam}(f_\alpha(L(x_{\alpha},y_{\alpha})))\neq \operatorname{diam}(L(f_\alpha(x_{\alpha}),f_\alpha(y_{\alpha}))).$}\par
	In this case, $f_\alpha(L(x_{\alpha},y_{\alpha}))$ is the subarc with larger diameter. This implies that $f_\alpha$ maps $\Gamma(x_{\alpha},y_{\alpha})$ to the subarc of $S^*$ with smaller diameter, i.e.
	$$\operatorname{diam}(f_\alpha(\Gamma(x_{\alpha},y_{\alpha})))=\operatorname{diam}(L(f_\alpha(x_{\alpha}),f_\alpha(y_{\alpha}))).$$ 
	It then follows that
	\begin{equation}
		\begin{aligned}
			\Delta(L(x_{\alpha},y_{\alpha});x_{\alpha},y_{\alpha})
			\leq&\Delta(\Gamma(x_{\alpha},y_{\alpha});x_{\alpha},y_{\alpha})\\ \leq&D(1,m)\Delta(f_\alpha(\Gamma(x_{\alpha},y_{\alpha}));f_\alpha(x_{\alpha}),f_\alpha(y_{\alpha}))\\
			=&D(1,m)\Delta((L(f_\alpha(x_{\alpha}),f_\alpha(y_{\alpha}));f_\alpha(x_{\alpha}),f_\alpha(y_{\alpha}))\\
			\leq&D(1,m)K^*.
		\end{aligned}
	\end{equation}
	
	\vskip0.25in
	As a result of Theorem \ref{Ahlfors}, 
	we obtain that $S_\alpha$ are uniform $K-$quasicircles for all $\alpha\in\Lambda$ by taking $K=D(1,m)K^*$.
\end{proof}

\begin{proof}[Proof of Theorem \ref{uni-qc}]
	Without loss of generality, we assume that $U$ is $f-$invariant. 
	Let $(g,V)$ be a holomorphic model for $(f,U)$.
	We already known that $V=\varphi(U)$, where $\varphi$ is conformal.
	Moreover, the non-trivial Julia components of $g$ are quasicircles.	
	
	\vskip0.15in
    {\bf Step I: Bounded turning on $Q\in\mathcal{Q}^*$.}\par
    By Ahlfors' characterization of quasicircle(Theorem \ref{Ahlfors}), 
    for each $Q\in\mathcal{Q}^*$,
    there exists some constant $K_Q$ such that
    $$\Delta(L(x,y);x,y)\leq K_Q$$ 
    where $x,y$ are arbitrary two different points in $Q\in\mathcal{Q}$
    and $L(x,y)\subset Q$ is a quasiarc joining $x$ and $y$ with
    $$\operatorname{diam}(L(x,y))\leq\operatorname{diam}(Q\setminus L(x,y)).$$ 

	It is clear that $\mathcal{Q}^*$ contains finite quasicircles. 
	Then $K_1=\max\{K_Q: Q\in\mathcal{Q}^*\}$ is the uniform turning constant of all Julia components in $\mathcal{Q}^*$.

	\vskip0.15in
	{\bf Step II: Bounded turning on $Q^{\dag}\in\mathcal{Q}^{\dag}-\mathcal{Q}^*$.}\par
	For each $Q^{\dag}\in\mathcal{Q}^{\dag}-\mathcal{Q}^*$, we assume that 
	$$g^{p_{\dag}}(Q^{\dag})\in\mathcal{Q}^*,\ \ g^{p_{\dag}-1}(Q^{\dag})\notin\mathcal{Q}^*$$
	where $p_{\dag}>0$ is some integer. 
	Let $Q=g^{p_{\dag}}(Q^{\dag})$.
	Since $\deg_{\beta_{n}(Q^{\dag})}(g^{p_{\dag}})$ monotonically decreasing converges to $\deg_{Q^{\dag}}(g^{p_{\dag}})$ as $n\rightarrow\infty$, 
	there exists an integer $N_1$ such that
	\begin{equation}
		\deg(g^{p_{\dag}}:I(\beta_{n+p_{\dag}}(Q^{\dag}))\rightarrow I(\beta_{n}(Q)))=\deg(g^{p_{\dag}}:\widehat{Q^{\dag}}\rightarrow\widehat{Q})=1
	\end{equation}
	for any $n\geq N_1$. 
    This implies that 
    \begin{equation}
    	g^{p_{\dag}}:I(\beta_{n+p_{\dag}}(Q^{\dag}))\rightarrow I(\beta_{n}(Q))
    \end{equation}
	is a holomorphic homeomorphism for $n\geq N_1$.
	
	Next, we take $N_2>N_1$ and let
	$$m=\min_{Q\in\mathcal{Q}^*}\operatorname{mod}(I(\beta_{N_1}(Q))\setminus\overline{I(\beta_{N_2}(Q))}).$$
	Applying Theorem \ref{turning-distortion} to the following situation $g=g^{p_{\dag}}$ and
	\begin{equation} 
		\begin{split}
		&(V_1,U_1)=(I(\beta_{N_1+p_{\dag}}(Q^{\dag})),I(\beta_{N_2+p_{\dag}}(Q^{\dag}))),\\ &(V_2,U_2)=(I(\beta_{N_1}(Q)),I(\beta_{N_2}(Q))),
		\end{split}
	\end{equation}
	we conclude that there is a constant $D(1,m)$ independent of $p_{\dag}$ such that
	$$\Delta(L(x_{\dag},y_{\dag});x_{\dag},y_{\dag})\leq D(1,m)K_1,$$
	where $x_{\dag},y_{\dag}$ are arbitrary two different points in $Q^{\dag}$. 
	
	\vskip0.15in
	Finally, we take
	$$K=\max\{K_1,D(1,m)K_1\}.$$
	Then for each non-trivial Julia component $Q_0$ of $g$ and for each $x,y\in Q_0$, $x\neq y$, we must have
	$$\Delta(L(x,y);x,y)\leq K.$$	
	This implies that all the quasicircles in $\mathcal{Q}$ satisfy the $K-$bounded turning condition uniformly.
	The proof is completed.
\end{proof}

So far, we have demonstrated that $V$ is a cofat domain, the proof of Theorem \ref{Fatou-Koebe} is then completed by applying Theorem \ref{cofat}.


\begin{thebibliography}{99}
	\bibitem{Ahlfors}
	L.V. Ahlfors, Lectures on quasiconformal mappings,
	Univ. Lecture Ser., vol. 38, Amer. Math. Soc., 2006.
	
		\bibitem{Bers}
	L. Bers, Uniformization by Beltrami equations,
	Commun. Pur. Appl. Math., 14(3): 215-228, 1961.
	
	\bibitem{Bonk1}
	M. Bonk, Uniformization of Sierpi\'nski carpets in the plane,
	Invent. Math., 186(3): 559-665, 2011.
	
	\bibitem{Bonk2}
	M. Bonk, Uniformization by square domains,
	The Journal of Analysis, 24: 103-110, 2016.
	
	\bibitem{B-M1}
	M. Bonk and S. Merenkov, Quasisymmetric rigidity of square Sierpi\'nski carpets,
	Ann. Math., 177(2): 591-643, 2013.
	
	\bibitem{B-M2}
	M. Bonk and S. Merenkov, Square Sierpi\'nski carpets and Latt\`es maps,
	Math. Z., 296: 695-718, 2020.
	
	\bibitem{Denneberg}
	R. Denneberg, Konforme Abbildung einer Klasse unendlich vielfach zusammenh\"{a}ngender schlichter Bereiche auf Kreisbereiche, Diss,
	Leipziger Berichte, 84: 331-352, 1932.
	
	\bibitem{Grotzsch}
	H. Gr\"{o}tzsch, Eine Bemerkung zum Koebeschen Kreisnormierungsprinzip,
	Leipziger Berichte, 87: 319-324, 1935.
	
	\bibitem{Haas}
	A. Haas, Linearization and mappings onto pseudocircle domains,
	Tran. Amer. Math. Soc., 282(1): 415-429, 1984.
	
	\bibitem{H-L}
	H. Hakobyan and W. Li, Quasisymmetric embeddings of slit Sierpi\'nski carpets,
	Tran. Amer. Math. Soc., 376: 8877-8918, 2023.
	
	\bibitem{H-M}
	S. Hildebrandt and H. von der Mosel, Conformal mapping of multuply connected Riemann domains by a variational approachm,
	Adv. Calc. Var., 2: 137-183, 2009.
	
	\bibitem{H-S1}
	Z.-X. He and O. Schramm, Fixed points, Koebe uniformization and circle packings,
	Ann. Math., 137: 369-406, 1993.
	
	\bibitem{H-S2}
	Z.-X. He and O. Schramm, Rigidity of circle domains whose boundary has $\sigma-$finite linear measure,
	Invent. Math., 115(3): 297-310, 1994,.
	
	\bibitem{H-S3}
	Z.-X. He and O. Schramm, Koebe Uniformization for ``Almost Circle Domains'',
	Am. J. Math., 117(3): 653-667, 1995.
	
	\bibitem{Koebe1}
	P. Koebe, \"{U}ber die Uniformisierung beliebiger analytischer Kurven III,
	Nachr. Ges Wiss. Gott., 337-358, 1908,.
	
	\bibitem{Koebe2}
	P. Koebe, Abhandlungen zur Theorie der Konformen Abbildung: VI. Abbildung mehrfach zusammenh\"{a}ngender Bereiche auf Kreisbereiche. Uniformisierung hyperelliptischer Kurven.(Iterationsmethoden),
	Math. Z., 7: 235-301, 1920.
	
	\bibitem{Koebe3}
	P. Koebe, \"{U}ber die Konforme Abbildung Endlich- und Unendlich-Vielfach Zusammenh\"{a}ngender Symmetrischer Bereiche, 
	Acta Math., 43: 263-287, 1922.
	
	\bibitem{Meschowski1}
	H. Meschowski, 
	\"{U}ber die konformen Abbildung gewisser Bereiche von unendlich hohen Zusammenhang auf Vollkreisbereiche, I,
	Math. Ann., 123(1): 392-405, 1951.
	
	\bibitem{Meschowski2}
	H. Meschowski, 
	\"{U}ber die konformen Abbildung gewisser Bereiche von unendlich hohen Zusammenhang auf Vollkreisbereiche, II,
	Math. Ann., 124(1): 178-181, 1952.
	
	\bibitem{N-Y}
	D. Ntalampekos and M. Younsi, Rigidity theorems for circle domains,
	Invent. Math., 220(1): 129-189, 2020.
	
	\bibitem{M-W}
	S. Merenkov and K. Wildrick, Quasisymmetric Koebe uniformization,
	Rev. Mat. Iberoam., 29(3): 859–910, 2013.
	Ann. Acad. Sci Fenn-m., Series AI, 50: 1-79, 1948.
	
	\bibitem{R-R}
	K. Rajala and M. Rasimus, Quasisymmetric Koebe uniformization with weak metric doubling measures,
	Illinois J. Math. 65(): 749-767, 2021.
	
	\bibitem{Rehmert}
	J. Rehmert, Quasisymmetric Koebe uniformization of metric surfaces with uniformly relatively separated boundary,
	Ph.D. Thesis, 2022.
	
	\bibitem{Sario}
	L. Sario, \"{U}ber Riemannsche Flachen mit hebbarem Rand,
	Ann. Acad. Sci Fenn-m., Ser. AI, 50: 1-79, 1948.
	
	\bibitem{Schramm}
	O. Schramm, Transboundary extremal length,
	J. Anal. Math., 66: 307-329, 1995.	
	
	\bibitem{Sterbel1}
	K.L. Sterbel, \"{U}ber das Kreisnormierungsproblem der konformen Abbildung,
	Ann. Acad. Sci Fenn., 1: 1-22, 1951.
	
	\bibitem{Sterbel2}
	K.L. Sterbel, \"{U}ber die konformen Abbildung von Gebieten unendlich hohen Zusammenhangs,
	Comment. Math. Helv., 27: 101-127, 1953.
	
	\bibitem{Younsi}
	M. Younsi, Removability, rigidity of circle domains and Koebe's conjecture,
	Adv. Math., 303: 1300-1318, 2016.
	
	\bibitem{C-P}
	G.Z. Cui and W.J. Peng, On the structure of Fatou domains,
	Sci. China Ser. A, 51(7): 1167-1186, 2008.
	
	\bibitem{McMullen}
	C. McMullen, Complex Dynamics and Renormalization,
	Ann. of Math. Stud., vol. 135, Princeton Univ. Press, Princeton, NJ, 1994.
	
	\bibitem{Q-W-Y}
	W.Y. Qiu, X.G. Wang and Y.C. Yin, Dynamics of McMullen maps,
	Adv. Math., 229(4): 2525-2577, 2012.
	
	\bibitem{Sullivan}
	D. Sullivan, Quasiconformal homeomorphisms and dynamics I: solution of the Fatou-Julia problem on wandering domains,
	Ann. Math., 122: 401-418, 1985.
	
	\bibitem{T-Y}
	L. Tan and Y.C. Yin, Local connectivity of the Julia set for geometrically finite rational maps,
	Sci. China Ser. A, 39: 39-47, 1996.
\end{thebibliography}
\end{document}